\documentclass[11pt,reqno]{amsart}
\title{Contractibility and total semi-stability conditions of Euclidean quivers}
\author{Yu Qiu}
\author{Xiaoting Zhang}

\newcommand{\Qy}[1]{\textcolor{blue!50!cyan}{#1}}
\usepackage[a4paper]{geometry}
\usepackage{float}
\usepackage[usenames,dvipsnames]{xcolor}
\usepackage{subfigure}

\usepackage[colorlinks, linkcolor=blue,anchorcolor=Periwinkle,
   citecolor=red,urlcolor=Emerald]{hyperref}
\usepackage{amsmath}           
\usepackage{amssymb}           
\usepackage{latexsym}          
\usepackage{bookmark}
\usepackage{enumitem,enumerate}
\usepackage{array}
\usepackage{graphicx}
\usepackage[bottom]{footmisc}
\usepackage{cleveref}
\usepackage{transparent}

\usepackage{tikz}
\usetikzlibrary{matrix}
\usepackage{url}
\usetikzlibrary{decorations.markings,cd}
\usetikzlibrary{arrows}
\usetikzlibrary{decorations.pathreplacing,decorations.pathmorphing,shadings,fadings,calc}
\usepackage{extarrows,stmaryrd}

\tikzset{->-/.style={decoration={  markings,  mark=at position #1 with
    {\arrow{>}}},postaction={decorate}}}
\tikzset{-<-/.style={decoration={  markings,  mark=at position #1 with
    {\arrow{<}}},postaction={decorate}}}
\tikzcdset{arrow style=tikz, diagrams={>=stealth}}

\newcolumntype{L}{>{$}l<{$}} 

\usepackage{tikz}
\usetikzlibrary{matrix, calc, arrows,cd,
decorations.pathreplacing, decorations.markings, decorations.pathmorphing,
backgrounds,fit,positioning,shapes.symbols,chains,shadings,fadings}

\usepackage{mathabx}

\usepackage{marvosym}


\theoremstyle{plain}
\newtheorem{theorem}{Theorem}[section]
\newtheorem{lemma}[theorem]{Lemma}
\newtheorem{corollary}[theorem]{Corollary}
\newtheorem{proposition}[theorem]{Proposition}

\newtheorem{convention}[theorem]{Convention}

\theoremstyle{definition}
\newtheorem{definition}[theorem]{Definition}

\numberwithin{equation}{section}

\newcommand{\on}[1]{\operatorname{#1}}

\newcommand\hua{\mathcal}

\newcommand\ZZ{\mathbb{Z}}

\newcommand\RR{\mathbb{R}}
\newcommand\CC{\mathbb{C}}
\newcommand\PP{\mathbb{P}}
\newcommand\wt{\mathbf{w}}
\newcommand\<{\langle}
\renewcommand\>{\rangle}

\newcommand{\rank}{\on{rank}}

\newcommand{\Proj}{\on{Proj}}
\newcommand{\Inj}{\on{Inj}}

\newcommand\Hom{\on{Hom}}


\newcommand{\h}{\on{\hua{H}}} 
\newcommand{\D}{\on{\hua{D}}} 

\newcommand\Stab{\on{Stab}} 
\newcommand\PStab{\on{\PP Stab}} 








\newcommand{\uhp}{\mathbb{H}} 




\newcommand{\sli}{\hua{P}}


\newcommand{\wh}[1]{\widetilde{#1}}


\usepackage{pifont}




\def\be{\begin{equation}}
\def\ee{\end{equation}}

\def\ToSS{\on{ToSS}}
\def\PToSS{\on{\PP ToSS}}
\def\canA{\on{C}_\wt}
\def\DQ{\D^b(\wpl)}
\def\wpl{\PP^1_\wt}
\def\cohW{\on{coh}(\wpl)}

\def\tube{\mathcal{T}}

\def\Apq{\wh{A_{p,q}}}
\def\ADn{\wh{D_{n}}}
\def\AEn{\wh{E_{n}}}

\def\buhp{\CC^*_{\ge0}}

\newcommand\OO{\mathcal{O}}
\renewcommand{\vec}[1]{ \overrightarrow{#1} }
\def\Vecc{\vec{c}}
\def\Vecw{\vec{\omega}}

\def\EQ{\wh{\Omega}}

\def\SEC{\Gamma}
\def\TSD{\on{TSD}}
\def\PPwt{\PP^1\setminus\wt}

\def\udim{\underline{\on{dim}}}

\usepackage{pdflscape}
\def\nw{l}

\begin{document}
\begin{abstract} 
We study the bounded derived category $\D$ of an Euclidean quiver, or equivalently, that
of coherent sheaves on a tame weighted projective line.
We give a description of the moduli space $\ToSS$ of the total semi-stability conditions on $\D$, which implies that $\ToSS$ can linearly contract to
any chosen non-concentrated stability condition in it.
For type $\Apq$, this gives an alternative proof of the contractibility of the whole space of stability conditions.

\bigskip\noindent
\emph{Key words:} stability conditions, total semi-stability, contractibility, Euclidean quivers,
weighted projective lines
\end{abstract}
\maketitle

\tableofcontents \addtocontents{toc}{\setcounter{tocdepth}{1}}
\setlength\parindent{0pt}
\setlength{\parskip}{5pt}

\section{Introduction}
\subsection{Stability conditions}
The spaces of stability conditions were introduced by Bridgeland \cite{B1} two decades ago,
building on the classical stability in geometric invariant theory, King's $\theta$-stability and Douglas' $\Pi$-stability.
Since then, there are tons of exciting developments and applications,
that relates to Donaldson-Thomas theory, cluster theory, flat surfaces, etc.
Despite all the works, many questions and conjectures remain to be explored.

We are in particular interested in the contractibility conjecture of the stability spaces.
Such a conjecture is related to the classical $K(\pi,1)$-conjecture, by realizing certain hyperplane arrangements as quotient of the stability spaces of some Calabi-Yau categories, cf. \cite{B1, B2} and \cite{QW} for instance.
Thus, the contractibility conjecture can be viewed as a categorical $K(\pi,1)$-conjecture,
which holds in greater generality (beyond hyperplane arrangements).


\subsection{Total semi-stability}

In \cite{Q3}, we introduce a piecewise Morse-ish function,
the global dimension function $\on{gldim}$ on the stability space $\Stab\D$ of a triangulated category $\D$.
This opens up a strategy to study the contractibility conjecture for non-Calabi-Yau categories.
The infimum of such a function, denote by $\on{gd}(\D)$, should be considered as the global dimension of $\D$.
A stability-version of Gabriel theorem is that, if $\on{gd}(\D)<1$ if and only if $\D=\D^b(\Omega)$ is the bounded derived category of a Dynkin quiver $\Omega$ (\cite[Thm.~3.2]{Q3}), which is rare.
The inequality $\on{gldim}(\sigma)<1$ for a stability condition $\sigma$ is equivalent to the condition that all indecomposable objects in $\D$ is $\sigma$-stable.
In which case, we call $\sigma$ is totally stable.
The leads to the study of total stability conditions, which is conducted in \cite{QZ}.

The next case, following this line of work, is to study total semi-stability conditions,
that corresponds to $\on{gldim}(\sigma)\le1$, cf. \cite[\S~3.2]{Q2}.
In this paper, we focus on the case for $\D=\D^b(\EQ)$ for an Euclidean quiver $\EQ$, which is derived equivalent to $\DQ$,
the bounded derived category of coherent sheaves on a weighted projective line $\wpl$ of tame type.

Let $\wt=(w_1,\ldots,w_l)$ be the weight of $\wpl$.
A \emph{totally semi-stable datum} $\TSD=(\mu_?^?,z)$ consists of a set of partitions
\begin{gather}
    1=\mu_i^1+\cdots+\mu_i^{w_i},\quad 1\le i\le \nw,
\end{gather}
for real positive numbers $\mu_i^j$, and $z\in\buhp$.
It is \emph{non-degenerate} if the central charge $Z$ on $\DQ$ determined by \eqref{eq:central-c}
is non-degenerate (i.e. for any $V\in\on{vect}(\wpl)$, one has $Z([V])\ne0$).

Moreover, we say a stability condition $\sigma$ is \emph{concentrated} if its heart $\h_\sigma$ is concentrated in one slice, i.e. $\h_\sigma=\sli(\phi)$ for some $\phi\in(0,1]$.

Here are the main results of the paper.

\begin{theorem}\label{thm:0}
Let $\PToSS(\wpl)$ be the projective space of total semi-stable stability conditions on $\DQ$.
\begin{itemize}
  \item A non-degenerate totally semi-stable datum $\TSD=(\mu_?^?,z)$ determines a stability condition on $\DQ$ by \eqref{eq:central-c}.
  It is totally semi-stable (i.e. in $\PToSS(\wpl)$) if it further satisfies a set of inequalities:
  \eqref{eq:D} for affine type D, \eqref{eq:AR-E6}/\eqref{eq:AR-E7}/\eqref{eq:AR-E8} for affine type E, respectively.
  \item For a representative $\sigma$ in $\PToSS(\wpl)$ such that any object in any rank one tube has phase one,
  we have
    \begin{itemize}
      \item $\sigma$ is non-concentrated if and only if $\h_\sigma=\cohW$.
      \item $\sigma$ is concentrated if and only if $\h_\sigma\cong\on{mod}\CC\EQ$ for some Euclidean quiver $\EQ$.
    \end{itemize}
  \item Given any non-concentrated stability condition $\sigma_0$ in $\PToSS(\wpl)$ (which exists),
  there is a linear flow that contracts $\PToSS(\wpl)$ to $\sigma_0$.
\end{itemize}
\end{theorem}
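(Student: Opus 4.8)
The plan is to reduce the statement to the explicit combinatorial model of $\PToSS(\wpl)$ supplied by the first two items and then to contract that model along straight lines. By the first item, the assignment sending a non-degenerate totally semi-stable datum $\TSD=(\mu_?^?,z)$ to the stability condition it determines via \eqref{eq:central-c} identifies $\PToSS(\wpl)$ with the region $\mathcal{R}$ of those data that in addition satisfy \eqref{eq:D} for affine type D and \eqref{eq:AR-E6}/\eqref{eq:AR-E7}/\eqref{eq:AR-E8} for affine type E; this identification is a homeomorphism for the Bridgeland topology, the inverse amounting to reading the partitions and $z$ off a representative normalised so that rank-one tubes lie at phase one (cf. the second item). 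It therefore suffices to produce a straight-line contraction of $\mathcal{R}$ onto the point $\TSD_0$ corresponding to $\sigma_0$, since its image under the homeomorphism is then a contraction of $\PToSS(\wpl)$ with continuous trace in $\Stab\D$.

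First I would note that, after discarding the non-degeneracy requirement, the ambient region is convex. Each partition $1=\mu_i^1+\cdots+\mu_i^{w_i}$ with $\mu_i^j>0$ ranges over an open simplex $\Delta_i$, and an affine combination of two of its interior points is again one; the parameter $z$ ranges over $\buhp$, which is convex and contains no antipodal pair, so segments between its points avoid $0$; and each of \eqref{eq:D}, \eqref{eq:AR-E6}, \eqref{eq:AR-E7}, \eqref{eq:AR-E8} is an affine inequality in $(\mu_?^?,z)$, hence cuts out finitely many half-spaces. Thus the convex set $\mathcal{R}_0:=\bigl(\prod_i\Delta_i\times\buhp\bigr)\cap H$, with $H$ the intersection of those half-spaces, contains $\mathcal{R}$, and the problem reduces to the single claim that the segment from $\TSD_0$ to any $\TSD\in\mathcal{R}$ stays non-degenerate.

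For that claim I would use the second item. With rank-one tubes normalised at phase one, $\sigma_0$ being non-concentrated gives $\h_{\sigma_0}=\cohW$, so the central charge $Z_0$ sends every indecomposable vector bundle strictly into the upper half plane $\uhp$ (positive rank forces nonzero imaginary part). Along the segment $\TSD_t=(1-t)\TSD+t\,\TSD_0$ the central charge varies affinely, $Z_t=(1-t)Z+tZ_0$, and for $V\in\on{vect}(\wpl)$ one has $Z_t([V])=0$ precisely when $Z([V])$ is a negative real multiple of $Z_0([V])\in\uhp$, that is, when $Z([V])$ lies strictly below the real axis. I would then extract from \eqref{eq:central-c} that, uniformly over $\mathcal{R}_0$, the $z$-block together with the simplex constraints forces $\im Z([V])\ge0$ for every indecomposable bundle $V$ — no bundle class is ever sent into the open lower half plane — which rules out the bad coincidence and gives $Z_t([V])\neq0$ for all $t\in[0,1]$, hence $\TSD_t\in\mathcal{R}$. (Should this sign estimate in fact hold with $\mathcal{R}=\mathcal{R}_0$, then $\mathcal{R}$ is outright convex and the homotopy is immediate.) The straight-line homotopy $\TSD_t$ is then the asserted linear flow. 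Finally, the parenthetical existence of a non-concentrated $\sigma_0$ is witnessed by the standard stability condition on $\cohW$ — the symmetric datum $\mu_i^j=1/w_i$ with a central value of $z$ — which one checks satisfies \eqref{eq:D}, \eqref{eq:AR-E6}, \eqref{eq:AR-E7}, \eqref{eq:AR-E8}.

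The crux, and the only place where convexity of $\mathcal{R}$ could genuinely fail, is the uniform sign control of $\im Z([V])$ in the previous paragraph: it must be secured for the infinitely many indecomposable bundles of every rank, using the precise shape of \eqref{eq:central-c} and exactly the Auslander--Reiten combinatorics of $\on{vect}(\wpl)$ that \eqref{eq:D}, \eqref{eq:AR-E6}, \eqref{eq:AR-E7}, \eqref{eq:AR-E8} encode. I expect this to be the main technical burden; the remainder is the bookkeeping of convex constraints and the standard continuity of the parametrisation.
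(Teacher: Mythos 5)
The proposal only engages with the third bullet of the theorem; the first two bullets are taken as inputs (``by the first item, the assignment \dots identifies $\PToSS(\wpl)$ with the region $\mathcal{R}$''). But those two bullets are where essentially all of the paper's work lies: one needs Lemma~\ref{pp:part} to show every tube sits at phase one so that a point of $\PToSS(\wpl)$ even yields a datum $(\mu_?^?,z)$; Proposition~\ref{pp:heart} for the dichotomy of hearts; Proposition~\ref{pp:AR} to reduce total semi-stability of a non-degenerate datum to Condition~$\bigstar$ on the arrows of $\on{AR}\on{vect}(\wpl)$; and then the case-by-case dimension-vector computations of Sections~4--6, which convert $\bigstar$ into the explicit inequalities \eqref{eq:D}, \eqref{eq:E6}, \eqref{eq:E7}, \eqref{eq:E8} (and show no extra inequalities arise in type $\Apq$). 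None of this is attempted, so as a proof of the stated theorem the proposal has a large hole rather than a small one.

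For the part you do address, your route is the same as the paper's: interpolate the data linearly, observe that the partition constraints and the defining inequalities are affine and hence preserved under convex combination, and check non-degeneracy along the segment. Two corrections there. First, $\buhp$ is \emph{not} convex and \emph{does} contain antipodal pairs ($\pm1\in\buhp$, and the segment between them passes through $0$); this is harmless only because one endpoint of every segment you use is $z_0$ with $\on{Im}(z_0)>0$ (which holds since $\sigma_0$ is non-concentrated, by Proposition~\ref{pp:heart}), so $\on{Im}(z(t))>0$ for all $t$ with $z(t)\ne z$. Second, the ``uniform sign control of $\on{Im}Z([V])$'' that you flag as the main open technical burden, to be secured ``using the Auslander--Reiten combinatorics,'' is in fact immediate and requires no AR input: by \eqref{eq:ImZ} one has $\on{Im}Z([V])=c_0\,\on{Im}(z)$ with $c_0=\on{rank}(V)>0$ for every indecomposable bundle $V$, so $\on{Im}(z(t))>0$ already forces $Z_t([V])\ne0$. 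Leaving this as an expectation means even the third bullet is not actually closed in your write-up, although the missing step is a one-liner.
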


The point is that we translate the semi-stability to a set of inequalities (in term of the partitions of the central charge of the imagery root), which are linear inequalities of central changes of exceptional simples of $\wpl$. These allow us to contract $\PToSS(\wpl)$ linearly.

An application of the study above is the following contractibility theorem for affine type A,
where we import the partial contractibility result from \cite{Q3}, that $\Stab\D^b(\Apq)$ contracts to $\ToSS\D^b(\Apq)$.

\begin{corollary}
The space of stability conditions $\Stab\D^b(\Apq)$ is contractible.
\end{corollary}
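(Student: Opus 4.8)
The plan is to concatenate three ingredients, the first two of which are already established. Write $\wpl$ for the tame (indeed domestic) weighted projective line associated with the Euclidean quiver $\Apq$, so that, via the derived equivalence $\D^b(\Apq)\simeq\DQ$ recalled in the introduction, we obtain a homeomorphism $\Stab\D^b(\Apq)\cong\Stab\DQ$ carrying $\ToSS\D^b(\Apq)$ onto $\ToSS\DQ$ (an exact equivalence preserves $\on{gldim}$). It therefore suffices to prove that $\Stab\DQ$ is contractible. By the partial contractibility result imported from \cite{Q3} and recalled above, the downward gradient-like flow of the global dimension function $\on{gldim}$ on $\Stab\DQ$ realizes a deformation retraction of $\Stab\DQ$ onto the sublevel set $\ToSS\DQ=\{\sigma:\on{gldim}(\sigma)\le1\}$; hence $\Stab\DQ\simeq\ToSS\DQ$ and the task reduces to contracting $\ToSS\DQ$.

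For the second stage I would use \Cref{thm:0}. The $\CC$-action on $\Stab\DQ$ rescales the central charge and shifts all phases by a common real number, hence preserves $\on{gldim}$; in particular it is free, leaves $\ToSS\DQ$ invariant, and its orbits are homeomorphic to $\CC$, so contractible. Now $\PToSS(\wpl)=\ToSS\DQ/\CC$, and by the third bullet of \Cref{thm:0} there is a non-concentrated $\sigma_0\in\PToSS(\wpl)$ together with a linear flow contracting $\PToSS(\wpl)$ to $\sigma_0$. Since this flow is defined through the relevant central charges, it lifts to a $\CC$-equivariant flow on $\ToSS\DQ$ which contracts $\ToSS\DQ$ onto the single orbit $\CC\cdot\sigma_0$; composing with a contraction of $\CC\cdot\sigma_0\cong\CC$ to $\sigma_0$ produces a contraction of $\ToSS\DQ$. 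Together with the first stage, $\Stab\D^b(\Apq)$ is contractible.

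The only part carrying real content is the first stage, namely that the $\on{gldim}$-flow retracts \emph{all} of $\Stab\D^b(\Apq)$ onto $\ToSS$. This is precisely the statement imported from \cite{Q3}, and it is also where the restriction to affine type $A$ is essential: \Cref{thm:0} on its own controls $\PToSS(\wpl)$ but not the whole stability manifold for a general Euclidean quiver. The remaining steps --- the reduction to $\wpl$, the $\CC$-equivariant lift, and the contraction of a single orbit --- are formal; the one point requiring care is that the deformation retraction of the first stage and the lifted linear flow of the second stage take place within the same (usual) topology on $\Stab\DQ$, so that their composition is a genuine contraction of $\Stab\D^b(\Apq)$.
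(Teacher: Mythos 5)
Your proposal matches the paper's argument: the paper likewise cites \cite{Q3} for the deformation retraction of $\Stab\D^b(\Apq)$ onto $\ToSS(\Apq)$ and then invokes \Cref{cor:con-A}, whose linear flow on $\PToSS(\Apq)=\ToSS(\Apq)/\CC$ yields contractibility of $\ToSS(\Apq)$. The only difference is that you spell out the $\CC$-equivariant lift from $\PToSS$ to $\ToSS$, which the paper leaves implicit in the phrase ``in particular, $\ToSS(\Apq)$ is contractible''; this is a correct and welcome elaboration, not a different route.
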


Such a theorem was proved in \cite{HKK} by showing $\Stab\D^b(\Apq)$ is isomorphic to the moduli space of quadratic differentials on $\CC\PP^1$ with an exceptional singularity.
Remarks on previous/related works:
\begin{itemize}
  \item The contractibility for $\wh{A_{1,1}}$ case
  is due to the calculation of the space in \cite{Oka}, cf. \cite{Q2}.
  \item The contractibility for $\wh{A_{1,2}}$ case is due to \cite{DK1}.
  \item The contractibility for the wild Kronecker quiver case
  is due to the calculation of the space in \cite{DK2}.
  \item The contractibility for the 3-Calbi-Yau $\Apq$ case
  is due to the calculation of the space in \cite{QQ}.
\end{itemize}

We think our approach of such a contractibility could apply to affine type D/E as well.

\subsection*{Acknowledgments}
We would like to thank 
He Ping and Han Zhe for proofreading;
Bernhard Keller and Chen Xiaowu for pointing out useful references, e.g. \cite{DR,Rin},
and Otani Takumi for pointing out his related work \cite{Ota}.
This work is supported by
National Natural Science Foundation of China (Grant No. 12425104, 12031007, 12101422 and 12271279)
and National Key R\&D Program of China (No. 2020YFA0713000).

\section{Preliminaries}
We will work over the field $\CC$ for simplicity.
Let $\D$ be a triangulated category whose Grothendieck group
$K(\D)$ is a finite rank lattice, i.e. $K(\D)\cong\ZZ^n$ for some $n$.

Let $\CC_{>0}=\{z\in\CC\mid\on{Im}z>0\}$ be the strict upper half plane,
$\uhp=\CC_{>0}\bigcup\RR_{<0}$ the half-open-half-closed upper half plane and
$\buhp=\{z\in\CC^*\mid\on{Im}z\ge0\}$ be the whole upper half plane.
\subsection{Stability conditions on triangulated categories}\

Recall the notion of stability conditions on a triangulated category from \cite{B1}.

\begin{definition}
\label{def:stab}
A {\it stability condition} $\sigma = (Z, \sli=\sli_\RR)$ on $\D$ consists of
a group homomorphism $Z \colon K(\D) \to \CC$, known as the {\it central charge}, and
an $\RR$-family of full additive (abelian in fact) subcategories $\sli(\phi) \subset \D$, $\phi\in\RR$,
known as the {\it slicing},
satisfying the following conditions:
\begin{itemize}
\item if  $0 \neq E \in \sli(\phi)$,
then $Z(E) = m(E) \exp(\mathbf{i} \pi \phi)$ for some $m(E) \in \RR_{>0}$,
\item
for all $\phi \in \RR$, $\sli(\phi + 1) = \sli(\phi)[1]$,
\item if $\phi_1 > \phi_2$ and $A_i \in \sli(\phi_i)\,(i =1,2)$,
then $\Hom(A_1,A_2) = 0$,
\item for each object $0 \neq E \in \D$, there is a finite sequence of real numbers
\begin{equation}\label{eq:>}
\phi_1 > \phi_2 > \cdots > \phi_l
\end{equation}
and a collection of exact triangles (known as the \emph{HN-filtration})
\begin{equation*}
\begin{tikzcd}[column sep=.8pc]
    0=E_0 \ar[rr] && E_1 \ar[rr]\ar[dl] && E_2 \ar[r]\ar[dl] & \ldots \ar[r] & E_{l-1} \ar[rr] && E_l=E \ar[dl] \\
        &A_1 \ar[ul,dashed]&&A_2 \ar[ul,dashed]&&&& A_l \ar[ul,dashed]
\end{tikzcd}
\end{equation*}
with $A_i \in \sli(\phi_i)$ for all $i$.
\item a technique condition, known as the \emph{support property}, which comes free in our setting.
\end{itemize}
\end{definition}
The non zero objects in slices $\sli(\phi)$ are called {\it semistable with phase $\phi$}
and simple objects among them are called {\it stable with phase $\phi$}.
For a semistable object $E\in\sli(\phi)$, denote by $\phi_\sigma(E)\coloneq\phi$ its phase.

There is a natural $\CC$-action on the set $\Stab\D$ of all stability conditions on $\D$, namely:
\[
    s \cdot (Z,\sli)=(Z \cdot e^{-\mathbf{i} \pi s},\sli_{\on{Re}(s)}),
\]
where $s\in\CC$ and $\sli_{x}(\phi)=\sli(\phi+x)$ for $\forall x,\phi\in\RR$.
Denote by $$\PStab(\D)=\Stab(\D)/\CC$$ the space of projective stability conditions.

The famous result in \cite{B1} states that $\Stab\D$ is a complex manifold with dimension $\rank K(\D)$
and the local coordinate is provided by the central charge $Z$.

\begin{definition}
For each $\phi\in\RR$, there is a heart $\h_\sigma^\phi=\sli(\phi,\phi+1]$.
We call $\h_\sigma=\h_\sigma^0$ the heart of $\sigma$.

We say a stability condition $\sigma$ is \emph{concentrated} if its heart $\h_\sigma$ is concentrated in one slice, i.e. $\h_\sigma=\sli(\phi)$ for some $\phi\in(0,1]$.
\end{definition}

\subsection{Total semi-stability}\

\begin{definition}
A \emph{total (semi-)stability condition} on $\D$ is a stability condition $\sigma$ such that any indecomposable object of $\D$ is (semi-)stable.
\end{definition}

Recall some facts.
\begin{itemize}
  \item There is a continuous function $\on{gldim}$, called \emph{global dimension function},
  on $\PStab(\D)$, defined as (cf. \cite{IQ,Q2})
    \begin{gather}\label{eq:geq}
      \on{gldim}\sigma=\sup\{ \phi_2-\phi_1 \mid
        \Hom(\sli(\phi_1),\sli(\phi_2))\neq0\}\in\RR_{\ge0}\cup\{+\infty\}.
    \end{gather}
  \item The \emph{global dimension} $\on{gldim}(\D)$ of a triangulated category $\D$ is defined to be the infimum of the function $\on{gldim}$ on $\PStab(\D)$ (cf. \cite{IQ,Q2}).
  \item A sufficient condition for a stability condition $\sigma$ to be \emph{totally stable} is
    $\on{gldim}(\D)<1$, which is equivalent to the fact that $\D$ is the derived category associated to a Dynkin diagram (of type A/B/C/D/E/F/G, cf. \cite[Thm.~3.2]{Q3}).
  \item There is a classification/description of (moduli space of) total stability conditions of any Dynkin diagram (\cite{QZ}).
  \item A stability condition $\sigma$ is \emph{totally semi-stable} if and only if $\on{gldim}\sigma\le1$ (\cite[Prop.~3.5]{Q2}).
\end{itemize}

An immediate corollary of \cite[Prop.~3.5]{Q2} is the following.
\begin{lemma}
A concentrated stability condition is totally semi-stable if and only if the heart is hereditary.
\end{lemma}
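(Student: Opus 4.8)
The plan is to reduce everything to the criterion just recalled --- that $\sigma$ is totally semi-stable if and only if $\on{gldim}\sigma\le 1$ (\cite[Prop.~3.5]{Q2}) --- and then to evaluate $\on{gldim}\sigma$ explicitly under the concentration hypothesis. So I would write $\h_\sigma=\sli(\phi)$ for some $\phi\in(0,1]$; the heart of any stability condition is abelian, so $\h_\sigma$ is an abelian category, and the task becomes: show that $\on{gldim}\sigma\le 1$ is equivalent to $\h_\sigma$ being hereditary.

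First I would pin down the slicing. If $0\ne X\in\sli(\psi)$ with $\psi\in(0,1]$, then $X\in\sli(0,1]=\h_\sigma=\sli(\phi)$, and comparing the arguments of $Z(X)$ computed in the two slices forces $\psi=\phi$; together with $\sli(\psi+1)=\sli(\psi)[1]$ this shows the slicing is supported on the coset $\phi+\ZZ$, with $\sli(\phi+n)=\h_\sigma[n]$ for all $n\in\ZZ$. Plugging this into \eqref{eq:geq}: a nonzero $\Hom(\sli(\phi_1),\sli(\phi_2))$ forces $\phi_1=\phi+n$ and $\phi_2=\phi+m$ with $n\le m$ (the opposite order gives $0$ by orthogonality), contributing $\phi_2-\phi_1=m-n$, while $\Hom(\sli(\phi_1),\sli(\phi_2))=\Hom_\D(\h_\sigma,\h_\sigma[m-n])$. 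Hence
\[
   \on{gldim}\sigma=\sup\{\, k\in\ZZ_{\ge 0}\mid \Hom_\D(X,Y[k])\ne0\ \text{for some}\ X,Y\in\h_\sigma\,\},
\]
which is exactly the (homological) global dimension of $\h_\sigma$. Thus $\on{gldim}\sigma\le 1$ iff $\h_\sigma$ has global dimension at most one, i.e. is hereditary, and the lemma follows from \cite[Prop.~3.5]{Q2}. (The degenerate case $\on{gldim}\sigma=0$ is consistent: there $\sigma$ is even totally stable and $\h_\sigma$ is semisimple, hence hereditary.)

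The hard part --- really the only subtlety --- is the identification, for $X,Y\in\h_\sigma$, of $\Hom_\D(X,Y[k])$ with the Yoneda group $\Ext^k_{\h_\sigma}(X,Y)$, which is what makes the global dimension of $\h_\sigma$ measured inside $\D$ agree with the intrinsic notion entering the word \emph{hereditary}. For $k=0,1$ this is automatic, and for $k\ge 2$ it holds whenever the realization functor $\D^b(\h_\sigma)\to\D$ is fully faithful; this covers all hearts relevant here, which are either module categories of finite-dimensional algebras or $\cohW$. Alternatively, if one reads \emph{hereditary} as $\Hom_\D(X,Y[k])=0$ for all $X,Y\in\h_\sigma$ and all $k\ge 2$, then the displayed formula turns the equivalence into a direct reformulation and no such comparison is needed. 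Everything else is a routine unwinding of the definitions.
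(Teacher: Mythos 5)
Your proof is correct and follows essentially the same route as the paper: both reduce to the criterion that total semi-stability is equivalent to $\on{gldim}\sigma\le 1$ (\cite[Prop.~3.5]{Q2}) and then identify $\on{gldim}\sigma$ with the global dimension of the heart when $\sigma$ is concentrated. The paper asserts $\on{gldim}\sigma=\on{gldim}\h_\sigma$ in one line, whereas you verify it by showing the slicing is supported on $\phi+\ZZ$ and also flag the comparison of $\Hom_\D(X,Y[k])$ with $\Ext^k_{\h_\sigma}(X,Y)$, a point the paper leaves implicit.
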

\begin{proof}
Since the stability condition $\sigma$ is concentrated, we have $\on{gldim}\sigma=\on{gldim}\h_\sigma$.
Then, the total semi-stability is equivalent to $\on{gldim}\h_\sigma\le1$, namely, $\h_\sigma$ is hereditary.
\end{proof}

In this work, our first aim is to describe the subspace
$$
    \ToSS(\D)=\on{gldim}^{-1}(0,1]\subset\Stab(\D)
$$
consisting of total semi-stability conditions on the triangulated category $\D^b(\EQ)$ for an Euclidean quiver $\EQ$.

We will frequently use the following easy fact.
\begin{lemma}\label{lem:easy}
Let $\sigma$ be totally semi-stable.
If there is a (directly) path from an (indecomposable) object $M$ to $N$ in the Auslander-Reiten quiver,
then $\phi_\sigma(M)\le\phi_\sigma(N)$.
In particular, $\phi_\sigma(\tau M)\le\phi_\sigma(N)$.
\end{lemma}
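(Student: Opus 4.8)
The plan is to reduce the statement to the case of a single arrow of the Auslander--Reiten quiver and then invoke the Hom-vanishing axiom of a stability condition. The key point is that an arrow $X\to Y$ of the Auslander--Reiten quiver is witnessed by a nonzero irreducible morphism $X\to Y$. Since $\sigma$ is totally semi-stable and $X,Y$ are indecomposable, both are $\sigma$-semistable, so $X\in\sli(\phi_\sigma(X))$ and $Y\in\sli(\phi_\sigma(Y))$. The third condition of Definition~\ref{def:stab} says $\Hom(\sli(\phi_1),\sli(\phi_2))=0$ whenever $\phi_1>\phi_2$; since $\Hom(X,Y)\ne0$, this forces $\phi_\sigma(X)\le\phi_\sigma(Y)$.

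Granting this, for a directed path $M=M_0\to M_1\to\cdots\to M_k=N$ in the Auslander--Reiten quiver I would simply chain the inequalities arrow by arrow, using that every intermediate vertex $M_i$ is again indecomposable and hence semistable, to obtain $\phi_\sigma(M)=\phi_\sigma(M_0)\le\cdots\le\phi_\sigma(M_k)=\phi_\sigma(N)$. For the last assertion, I would use that in $\D=\D^b(\EQ)$ every indecomposable $M$ fits into an Auslander--Reiten triangle $\tau M\to E\to M\to\tau M[1]$: each indecomposable summand $E_i$ of $E$ yields arrows $\tau M\to E_i$ and $E_i\to M$, so there is a directed path from $\tau M$ to $M$; concatenating it with the given path from $M$ to $N$ and applying the first assertion gives $\phi_\sigma(\tau M)\le\phi_\sigma(N)$.

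I do not expect a real obstacle here: the content is precisely the Hom-vanishing axiom combined with the elementary facts that irreducible morphisms are nonzero and that indecomposables are semistable under total semi-stability. The only point needing a word of care is the reduction to single arrows rather than arguing that the composite along the whole path is nonzero (which may fail in general); working arrow by arrow avoids the issue completely.
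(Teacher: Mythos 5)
Your proof is correct and is precisely the intended argument: the paper states this as an ``easy fact'' without proof, and your reduction to single arrows (an arrow of the AR-quiver gives a nonzero irreducible morphism, so semistability of both ends plus the Hom-vanishing axiom forces $\phi_\sigma(X)\le\phi_\sigma(Y)$), chained along the path, is the standard justification. Your treatment of the last assertion via the mesh $\tau M\to E_i\to M$ coming from the Auslander--Reiten triangle is also the right one (and the middle term is nonzero in every component of $\on{AR}\DQ$, so the path always exists).
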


\def\REF{R}
\begin{convention}\label{con1}
We may work with the projective version $\PToSS(\D)=\ToSS(\D)/\CC$.
Moreover, we will fix an indecomposable object $\REF$ in $\D$ as a reference object
and always normalize a stability condition such that $Z(\REF)=-1$ with $\phi_\sigma(\REF)=1$
to represent a point in $\PToSS(\D)$.
\end{convention}
\subsection{Weighted projective lines of tame type and canonical algebras}\

Consider the weighted projective line $\CC\PP^1(w_1,w_2,\ldots,w_\nw)$ of tame type, where each $w_i (1\le i\le l)$ 
 is an integer. Write $\wt=(w_1,\ldots,w_\nw)$ and $\wpl$ for $\CC\PP^1(\wt)$.
Denote by
\begin{itemize}
  \item $\cohW$ the abelian category of coherent sheaves on $\wpl$,
  \item $\on{vect}(\wpl)$ its subcategory consisting of vector bundles on $\wpl$,
  \item $\on{coh}_0(\wpl)$ its subcategory consisting of finite-length/torsion objects and
  \item $\DQ=\D^b(\cohW)$ its bounded derived category.
\end{itemize}

The rank one abelian group $\mathbb{L}$ is defined by
\[
    \mathbb{L}\colon=\<\vec{x_1},\ldots,\vec{x_\nw},\Vecc \mid w_i\,\vec{x_i}=\Vecc \>
\]
Denote by $\OO(\vec{x})$ the \emph{twisted structure sheaf} on $\wpl$ for any $\vec{x}\in\mathbb{L}$.
For more details, see \cite{GL}.

There is a \emph{canonical algebra} $\on{C}_\wt= \CC Q_\wt^{\on{op}}/I_\wt$ (cf. \cite{Rin}) with the quiver $Q_\wt$
\begin{equation}\label{eq:pqr}
\begin{tikzcd}[column sep=25,row sep=18]
    & V_1^1 \ar[r,"\Qy{x_1^2}"] & \cdots\cdots \ar[r,"\Qy{x_1^{w_1-1}}"]
        & V_1^{w_1-1} \ar[dr,bend left=10,"\Qy{x_1^{w_1}}"]\\
    V_0 \ar[ur,bend left=10,"\Qy{x_1^{1}}"] \ar[ddr,bend left=-20,"\Qy{x_\nw^{1}}"'] \ar[r,"\Qy{x_2^{1}}"']
        & V_2^{1} \ar[r,"\Qy{x_2^{2}}"'] & \cdots\cdots \ar[r,"\Qy{x_2^{w_2-1}}"']
        & V_2^{w_2-1}\ar[r,"\Qy{x_2^{w_2}}"'] & V_{\infty} \\
    &&\cdots\cdots\\
    & V_\nw^{1} \ar[r,"\Qy{x_\nw^{2}}"'] & \cdots\cdots \ar[r,"\Qy{x_\nw^{w_\nw-1}}"']
        & V_\nw^{w_\nw-1} \ar[uur,bend left=-20,"\Qy{x_\nw^{w_\nw}}"']
\end{tikzcd}
\end{equation}
and the canonical relation $I_\wt$:
\[
    \prod_{j=1}^{w_1} x_1^j+\prod_{j=1}^{w_2} x_2^j+
        \cdots+\prod_{j=1}^{w_\nw} x_\nw^j=0, \quad\text{if $\nw\ge3$}.
\]
We set $V_i^0=V_0$ and $V_i^{w_i}=V_\infty$ for any $1\le i\le \nw$.
Then there is a triangle equivalence (\cite{GL})
\begin{equation}\label{eq:tri-equiv}
  \D^b(\canA)\cong\DQ
\end{equation}
identifying
\[\begin{cases}
    P_0=\OO(\vec{0}), \\
    P_i^j=\OO(j\,\vec{x_i}), & \mbox{for $1\le i\le \nw$, $1\le j<w_i$},\\
    P_\infty=\OO(\Vecc).
  \end{cases}
\]
Here $P_?^?$ is the projective corresponding to the vertex $V_?^?$.
Set $P_i^0=P_0$ and $P_i^{w_i}=P_\infty$ for any $1\le i\le \nw$.
The Auslander-Reiten functor $\tau$ is realized by
\[
    \OO(\vec{v}+\Vecw)=\tau( \OO(\vec{v}) )
\]
on twisted structure sheaves, where
\[
    \Vecw=(\nw-2)\Vecc-\sum_{i=1}^{\nw}\vec{x_i}
\]
is the dualizing element.

We will only consider the tame case, i.e. there is an Euclidean quiver $\EQ$ such that
there is another triangle equivalence
\begin{equation}\label{eq:tri-equiv2}
  \DQ\cong\D^b(\EQ).
\end{equation}
More precisely, we have
\begin{itemize}
  \item For type $\Apq$: $\nw=2$ and $(w_1,w_2)=(p,q)$.
  However, we may treat this case as $\nw=3$ with $(w_1,w_2,w_3)=(p,q,1)$ uniformly, where the $w_3=1$ branch in \eqref{eq:pqr} vanishes/degernates.
  \item For type $\ADn$: $\nw=3$ and $(w_1,w_2,w_3)=(2,2,n-2)$.
  \item For type $\AEn$: $\nw=3$ and $(w_1,w_2,w_3)=(2,3,n-3)$ for $n=6,7,8$.
\end{itemize}

We will abbreviate $\ToSS(\DQ)$ by $\ToSS(\wpl)$ or $\ToSS(\EQ)$ and similar for $\PToSS$.
\paragraph{\textbf{Auslander-Reiten (AR) quiver}}\

Note that $\DQ$ is hereditary and any object of which is a shift of objects in $\cohW$.
Moreover, we have
\[
    \on{AR}\DQ=\bigsqcup_{k\in\ZZ} \on{AR}\cohW[k],
\]
where $\on{AR}$ denotes the AR-quiver. Finally,
 $\on{AR}\cohW$ naturally decomposes into:
\[
    \on{AR}\cohW=\on{AR}\on{vect}(\wpl)\bigsqcup \on{AR}\on{coh}_0(\wpl)
\]
where $\on{AR}\on{vect}(\wpl)\cong\ZZ\EQ$ consists of indecomposable vector bundles and
$\on{AR}\on{coh}_0(\wpl)$ consisting of a $\CC$-family of tubes.

We have not explicitly specify the triangle equivalence, which we will do later in each case.
Nevertheless, we will choose a section $\SEC$ in $\on{AR}\on{vect}(\wpl)$
and identify it with $\on{AR}\Proj\CC\EQ\cong\EQ^{\on{op}}$.

\paragraph{\textbf{Tubes in $\on{AR}\on{coh}_0(\wpl)$}}\

There are two types of tubes:
\begin{itemize}
  \item A rank $w_i$ tube $\tube_i$ consisting of \emph{exceptional simple} $S_i^j$ in its bottom, for $1\le i\le \nw$:
    \begin{equation}\label{eq:tube-i}
      \begin{tikzcd}
        S_i^{w_i} & S_i^{w_i-1} \ar[l,dashed,"\tau"'] & \cdots\ar[l,dashed,"\tau"']
        & S_i^{2}   \ar[l,dashed,"\tau"']
        & S_i^1     \ar[l,dashed,"\tau"'] & S_i^0=S_i^{w_i} \ar[l,dashed,"\tau"'].
      \end{tikzcd}
    \end{equation}
    Here, for $S_i^j$, the superscript $j$ will take values in $\ZZ_{w_i}$ from now on.
  \item A family rank $1$ tubes $\{ \tube^\lambda \}$ with \emph{usual simple} $S^\lambda$, for $\lambda\in\PPwt$.
\end{itemize}

\paragraph{\textbf{Partitions of the imaginary root}}\

Denote by $\delta\in K( \D^b(\EQ) )$ the \emph{imaginary root} of $\EQ$.
We have the following well-known facts:
\begin{itemize}
\item For each tube $\tube_i$, there is a partition
\begin{equation}\label{eq:sum-p}
    \delta=[S_i^{1}]+\cdots+[S_i^{w_i}].
\end{equation}
\item For each tube $\tube^\lambda$, one has $[S^\lambda]=\delta$.
  \item $[\OO(\Vecc)]-[\OO(\vec{0})]=[P_\infty]-[P_0]=\delta$.
  \item $[\OO(j\,\vec{x_i})]-[\OO( (j-1)\,\vec{x_i})]=[S_i^j]$.
\end{itemize}

\section{Total semi-stability data: the prototype}

\subsection{The standard heart}

We will fix $S^{\lambda_0}$ (for some $\lambda_0\in\PPwt$) to be the reference object $\REF$ in \Cref{con1}.
So that for any $\sigma=(Z,\sli)\in\PToSS(\wpl)$, we have
\[Z([S^{\lambda_0}])=Z(\delta)=-1\quad \text{and}\quad\phi_\sigma(S^{\lambda_0})=1.\]

Given a semi-stable stability condition $\sigma$ in $\PToSS(\wpl)$,
we denote
\begin{equation}\label{eq:muij}
    \mu_i^j\colon=-Z([S_i^j])
\end{equation}
for $1\le i\le \nw$ and $j\in\ZZ_{w_i}$.

\begin{lemma}\label{pp:part}
For any usual simple $S^\lambda$ in the bottom of a rank one tube $\tube^\lambda$,
we have $\phi_\sigma(S^\lambda)=1$.
Moreover, $\phi_\sigma(S_i^j)=1$ for all $1\le i\le \nw, j\in\ZZ_{w_i}$ and
\eqref{eq:sum-p} gives $\nw$ partitions of $|Z(\delta)|=1,$ that is,
\begin{gather}\label{eq:part}
    1=\mu_i^1+\cdots+\mu_i^{w_i},\quad 1\le i\le \nw,
\end{gather}
with each $\mu_i^j\in\RR_+$.
\end{lemma}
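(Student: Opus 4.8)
The plan is to exploit the rigidity that total semi-stability imposes on phases via the AR-quiver, using \Cref{lem:easy} together with the $\tau$-periodicity inside the tubes. First I would show $\phi_\sigma(S^\lambda)=1$ for every usual simple $S^\lambda$ in a rank one tube. Since $[S^\lambda]=\delta$, we have $Z([S^\lambda])=Z(\delta)=-1$, so $\phi_\sigma(S^\lambda)\in\ZZ$; and since $S^\lambda$ lies in the heart $\h_\sigma=\sli(0,1]$ (it is an indecomposable object of $\cohW$, hence up to shift we may assume it sits in the standard heart — more precisely, normalize so that $S^{\lambda_0}$ has phase $1$, and compare), the only possibility compatible with $Z([S^\lambda])=-1$ is $\phi_\sigma(S^\lambda)=1$. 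The cleanest way: $S^\lambda$ is $\tau$-periodic of period one, so $\phi_\sigma(\tau S^\lambda)=\phi_\sigma(S^\lambda)$; combined with the Serre-duality/AR constraint $\phi_\sigma(\tau M)\le \phi_\sigma(M)$ coming from a path in the AR-quiver (\Cref{lem:easy}), and the fact that $Z([S^\lambda])=-1$ pins the phase to an integer, one forces $\phi_\sigma(S^\lambda)=1$.

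Next I would handle the exceptional simples $S_i^j$ in a rank $w_i$ tube. Here \eqref{eq:sum-p} gives $\delta=[S_i^1]+\cdots+[S_i^{w_i}]$, and the $S_i^j$ are permuted cyclically by $\tau$, so by \Cref{lem:easy} applied around the tube we get $\phi_\sigma(S_i^1)\le\phi_\sigma(S_i^2)\le\cdots\le\phi_\sigma(S_i^{w_i})\le\phi_\sigma(S_i^1)$ (the last inequality because $\tau S_i^1 = S_i^{w_i}$, i.e. there is an AR-path from $S_i^{w_i}$ back to $S_i^1$, or equivalently $\phi_\sigma(\tau S_i^1)\le\phi_\sigma(S_i^1)$ reversed appropriately). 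Hence all the $\phi_\sigma(S_i^j)$ are equal, say to a common value $\phi$. Since each $S_i^j$ lies in the heart $\h_\sigma$, we have $\phi\in(0,1]$. But then $Z(\delta)=\sum_j Z([S_i^j])=\sum_j m(S_i^j)e^{\mathbf{i}\pi\phi} = \big(\sum_j m(S_i^j)\big)e^{\mathbf{i}\pi\phi}$, and this equals $Z(\delta)=-1=e^{\mathbf{i}\pi\cdot 1}$; since $\sum_j m(S_i^j)>0$, we conclude $\phi=1$. Therefore $\phi_\sigma(S_i^j)=1$ for all $i,j$.

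Finally, with $\phi_\sigma(S_i^j)=1$ established, the numbers $\mu_i^j=-Z([S_i^j])$ satisfy $Z([S_i^j])=m(S_i^j)e^{\mathbf{i}\pi}= -m(S_i^j)$, so $\mu_i^j=m(S_i^j)\in\RR_{>0}$, and summing \eqref{eq:sum-p} over $j$ gives $-1=Z(\delta)=-\sum_j\mu_i^j$, i.e. $1=\mu_i^1+\cdots+\mu_i^{w_i}$, which is \eqref{eq:part}.

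The main obstacle I anticipate is the bookkeeping in the first two steps: verifying that the relevant indecomposables $S^\lambda$ and $S_i^j$ genuinely sit in the heart $\h_\sigma$ (so that their phases lie in $(0,1]$ rather than some other integer translate), and correctly orienting the AR-paths so that \Cref{lem:easy} yields the cyclic chain of inequalities. Both are routine given the structure of $\on{AR}\cohW$ and the normalization in \Cref{con1}, but they require care about which direction $\tau$ moves the phase and about the fact that $\delta$ is an imaginary root (so that any semistable object with class a positive multiple of $\delta$ must have the same phase as $S^{\lambda_0}$, by the central-charge argument above).
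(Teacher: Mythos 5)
There is a genuine gap, and it sits exactly at the point you flag as ``routine'': the claim that $S^\lambda$ (and likewise each $S_i^j$) lies in the heart $\h_\sigma$, so that its phase lies in $(0,1]$. At this stage of the paper nothing identifies $\h_\sigma$ with $\cohW$ --- that identification is the content of Proposition~\ref{pp:heart}, which is proved \emph{after} and \emph{using} this lemma --- so assuming heart membership here is circular. What the central charge actually gives you is only that $Z([S^\lambda])=Z(\delta)=-1$ forces $\phi_\sigma(S^\lambda)\in 1+2\ZZ$, i.e.\ the phase is pinned modulo $2$, not to the value $1$. Your proposed ``cleanest way'' does not close this gap: since $\tau S^\lambda=S^\lambda$, the inequality $\phi_\sigma(\tau S^\lambda)\le\phi_\sigma(S^\lambda)$ from \Cref{lem:easy} is a vacuous equality and carries no information. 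Similarly, your closing remark that any semistable object with class a positive multiple of $\delta$ ``must have the same phase as $S^{\lambda_0}$ by the central-charge argument'' is false as stated: two semistable objects with equal central charge may have phases differing by any even integer.

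The missing ingredient is a comparison of $S^\lambda$ with the normalized reference object $S^{\lambda_0}$ \emph{through the vector bundles}. The paper uses that for every indecomposable vector bundle $V$ one has $\Hom(S^{\lambda_0}[-1],V)\ne0\ne\Hom(V,S^{\lambda})$; total semi-stability then sandwiches
$0=\phi_\sigma(S^{\lambda_0})-1\le\phi_\sigma(V)\le\phi_\sigma(S^{\lambda})$, which together with $\phi_\sigma(S^\lambda)\in1+2\ZZ$ (and the symmetric estimate for the other sign) forces $\phi_\sigma(S^\lambda)=1$. The same Hom-nonvanishing argument is what rules out $\phi_i\in 1+2\ZZ\setminus\{1\}$ for the common phase of the exceptional simples $S_i^j$; your cyclic chain of inequalities around the tube (which is correct and matches the paper) only shows the phases in one tube agree and are odd integers, and your subsequent step $\bigl(\sum_j m(S_i^j)\bigr)e^{\mathbf{i}\pi\phi}=-1\Rightarrow\phi=1$ again only yields $\phi\in1+2\ZZ$ unless you have already confined $\phi$ to $(0,1]$. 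Once the Hom-comparison is inserted, the rest of your argument (positivity of the $\mu_i^j$ and the partition identity) is fine and agrees with the paper.
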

\begin{proof}
For any usual simple $S^\lambda$, we have
$Z([S^\lambda])=Z(\delta)=-1.$
Since $\sigma\in\PToSS(\wpl)$, then
each $S^\lambda$ is semi-stable and thus $\phi_\sigma(S^\lambda)=1+2k_{\lambda}$ for some $k_{\lambda}\in\ZZ$.

Suppose $\phi_\sigma(S^\lambda)\ne1$ for some $\lambda\in\PPwt$, that is, $k_{\lambda}\neq0$.
Without loss of generality, we can assume $k_{\lambda}<0$, which implies $\phi_\sigma(S^\lambda)\le-1$.
Note that for any indecomposable vector bundle $V$ we have 
\[
    \Hom( S^{\lambda_0}[-1], V)\ne 0 \ne \Hom(V, S^{\lambda} ),
\]
cf. \cite{GL}.
Then the total semi-stability yields that
\[
    0=\phi_\sigma(S^{\lambda_0})-1=\phi_\sigma(S^{\lambda_0}[-1])\le \phi_\sigma(V) \le \phi_\sigma(S^{\lambda})\le -1,
\]
which is a contradiction. Hence $\phi_\sigma(S^\lambda)=1$ for any $\lambda\in\PPwt$.

For any fixed $1\le i\le \nw$, \Cref{lem:easy} implies that
\[
    \phi_\sigma(S_i^{w_i})\le\phi_\sigma(S_i^{w_i-1})\le\cdots\le\phi_\sigma(S_i^1)\le\phi_\sigma(S_i^0)=\phi_\sigma(S_i^{w_i}).
\]
Thus each $S_i^j$, where $j\in\ZZ_{w_i}$, has the same phase, denoted by $\phi_i$. Since \eqref{eq:sum-p} deduces that
\[-1=Z(\delta)=Z([S_i^{1}])+\cdots+Z([S_i^{w_i}]),\] 
then we have $Z([S_i^j])\in\RR_{<0}$ with phase $\phi_i=1+2k_i$ for some $k_i\in\ZZ$.
In particular, \eqref{eq:part} holds.
Using the same argument above, one can obtain that each $k_i$ must be zero.
This completes the proof.
\end{proof}

\begin{proposition}\label{pp:heart}
For $\sigma=(Z,\sli)\in\PToSS(\wpl)$, we have
\begin{itemize}
  \item $\sigma$ is non-concentrated if and only if $\h_\sigma=\cohW$.
  \item $\sigma$ is concentrated if and only if
  $\h_\sigma\cong\on{mod}\CC \EQ'$ for some Euclidean quiver $\EQ'$ if and only if $\cohW<\h_\sigma<\cohW[1]$.
\end{itemize}
Recall that the partial order on hearts is defined to be the reverse inclusion order of the corresponding t-structures.
\end{proposition}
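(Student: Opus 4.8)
The plan is to exploit the structure of $\on{AR}\DQ$ established in the preliminaries, together with \Cref{pp:part}, which already tells us that every exceptional simple $S_i^j$ and every usual simple $S^\lambda$ has phase $1$ under $\sigma\in\PToSS(\wpl)$. First I would set up the dichotomy carefully: since $\DQ$ is hereditary and every object is a shift of a coherent sheaf, the heart $\h_\sigma$ is determined by which shifts of indecomposables it contains; and since all simples in all tubes already lie in the slice $\sli(1)$, the only freedom is in the phases of the indecomposable vector bundles. By \Cref{lem:easy} applied along $\ZZ\EQ=\on{AR}\on{vect}(\wpl)$ and using $\Hom(S^{\lambda_0}[-1],V)\ne0\ne\Hom(V,S^{\lambda_0})$ for every indecomposable bundle $V$, we get $0\le\phi_\sigma(V)\le 1$ for all such $V$. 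So the heart is pinned between $\cohW$ and $\cohW[1]$, which immediately gives the implication "$\cohW\le\h_\sigma\le\cohW[1]$" and sets up the two remaining cases according to whether some bundle has phase strictly in $(0,1)$ or not.

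Next I would prove the non-concentrated case. If $\h_\sigma=\cohW$, then $\sigma$ is clearly non-concentrated because $\cohW$ is not concentrated in a single slice (the torsion sheaves sit in phase $1$ while the bundles must sit strictly below, as otherwise $\on{gldim}\sigma$ would exceed $1$ — here I would use that $\cohW$ is not hereditary-of-length, or more directly that there are bundles $V$ with $\Hom(V,S^\lambda)\ne 0$ and, by Serre duality/AR theory, $\Hom(S^{\lambda'},V[1])\ne0$ forcing a strict phase gap). Conversely, if $\sigma$ is non-concentrated, I need to rule out $\cohW<\h_\sigma$. The key observation is that if $\h_\sigma\ne\cohW$ then some indecomposable bundle $V$ has $\phi_\sigma(V)<1$; but then I claim ALL indecomposable bundles have phase strictly less than $1$ and all torsion sheaves have phase $1$, which — combined with the fact that $\on{gldim}\sigma\le 1$ constrains how low the bundle phases can go — forces $\h_\sigma$ to be a length category, namely $\on{mod}\CC\EQ'$, contradicting non-concentratedness. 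The cleanest route is: $\h_\sigma$ is the heart of a bounded $t$-structure on $\DQ$ with only finitely many indecomposables-up-to-the-AR-translate-pattern in each "layer", and since torsion part is fixed in phase $1$ while all bundles have dropped to $\sli(0,1)$, the heart $\h_\sigma=\sli(0,1]$ decomposes as (shifted bundles in degree $0$) together with (torsion sheaves), which is exactly a tilt of $\cohW$ at the torsion pair (torsion, bundles), and a standard result (e.g. \cite{GL} or Lenzing--Meltzer) identifies this tilted heart with $\on{mod}$ of a canonical/Euclidean algebra $\CC\EQ'$.

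For the concentrated case I would argue both directions. If $\h_\sigma\cong\on{mod}\CC\EQ'$ for a Euclidean quiver $\EQ'$, then $\h_\sigma=\sli(1)$: every indecomposable of $\on{mod}\CC\EQ'$ is semistable (as $\sigma$ is totally semi-stable) and lies in $\h_\sigma=\sli(0,1]$; but the simples of $\on{mod}\CC\EQ'$ — being the simples of a connected length category with the imaginary root $\delta$ summing from tube simples all at phase $1$ — force, via the AR-path argument of \Cref{lem:easy} run inside $\h_\sigma$, every simple to phase $1$, hence every object to phase $1$, so $\sigma$ is concentrated. Conversely, if $\sigma$ is concentrated then $\h_\sigma=\sli(\phi)$; after the normalization of \Cref{con1} we have $\phi=1$, so $\h_\sigma\subset\sli(0,1]$ is a length abelian category (all objects semistable of the same phase $\Rightarrow$ finite length), hereditary by the concentration lemma already proved, and sitting strictly between $\cohW$ and $\cohW[1]$ since it is not all of $\cohW$ (which is not a length category) — thus it is the module category of a finite-dimensional hereditary algebra derived-equivalent to $\DQ$, i.e. $\on{mod}\CC\EQ'$ for a Euclidean quiver $\EQ'$ by Gabriel/Happel's classification of hereditary algebras with the given derived category.

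The main obstacle I anticipate is the converse direction of the non-concentrated case: showing that $\cohW<\h_\sigma$ (i.e. some bundle phase dropped) actually forces $\h_\sigma$ to be a length category and not merely something "between" $\cohW$ and $\cohW[1]$ that is still non-concentrated. This requires knowing that the only hearts of bounded $t$-structures on $\DQ$ strictly between $\cohW$ and $\cohW[1]$ are the tilts at torsion pairs supported on part of the tube/bundle structure, and among those the total semi-stability constraint (all torsion simples at phase $1$) leaves exactly the ones giving $\on{mod}\CC\EQ'$; I would lean on the classification of tilting/torsion pairs for tame hereditary algebras (Ringel, \cite{Rin}; or \cite{GL} for the weighted projective line picture) rather than reprove it. A secondary subtlety is making the phase-pinning argument $0\le\phi_\sigma(V)\le1$ genuinely use $\on{gldim}\sigma\le1$ to get strictness where needed; this should follow by the same $\Hom$-nonvanishing inequalities as in the proof of \Cref{pp:part}.
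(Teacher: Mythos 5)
There is a genuine gap in the converse direction of the non-concentrated case, precisely the step you flag as the main obstacle. Your key claim there --- that if $\h_\sigma\ne\cohW$ then \emph{all} indecomposable bundles have phase strictly less than $1$, so that $\h_\sigma$ is the tilt of $\cohW$ at the torsion pair $(\on{coh}_0(\wpl),\on{vect}(\wpl))$ --- is false, and the heart you name is not $\on{mod}\CC\EQ'$ in any case: the HRS tilt at that full torsion pair is $\langle \on{vect}(\wpl)[1],\on{coh}_0(\wpl)\rangle$, which admits infinite chains of proper epimorphisms such as $\OO[1]\twoheadrightarrow\OO(\vec{x_1})[1]\twoheadrightarrow\OO(2\,\vec{x_1})[1]\twoheadrightarrow\cdots$ (rotate $\OO\to\OO(\vec{x_1})\to S$) and is therefore not a length category. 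The missing idea is the linear-algebra observation on which the paper's dichotomy rests: writing $z=Z([P_0])$ and expanding $[V]=c_0[P_0]+c_\delta\delta+\sum_{i,j} c_i^j[S_i^j]$, all basis vectors except $[P_0]$ have real central charge, so $\on{Im}Z([V])=c_0\on{Im}(z)$ with $c_0>0$. Hence either $\on{Im}(z)>0$ and \emph{every} indecomposable bundle has phase in $(0,1)$, giving $\h_\sigma=\cohW$; or $\on{Im}(z)=0$ and every bundle has real central charge, i.e.\ phase in $\{0,1\}$. In the latter case \Cref{lem:easy} forces each $\tau$-orbit in $\on{AR}\on{vect}(\wpl)$ to jump exactly once from phase $0$ to phase $1$, the jump points assemble into a section $\SEC$ of $\ZZ\EQ$, and $\h_\sigma=\langle\tau^{\le0}\SEC,\on{coh}_0(\wpl),\tau^{>0}\SEC[1]\rangle\cong\on{mod}\CC\SEC^{\on{op}}$: only the bundles on one side of the section are shifted, not all of them. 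Without this observation, your appeal to a classification of hearts between $\cohW$ and $\cohW[1]$ does not by itself show that total semi-stability excludes the non-length tilts.

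A secondary, repairable point: in the forward direction you assert that $\h_\sigma=\cohW$ forces the bundles strictly below phase $1$ via a Serre-duality argument; this is cleaner as an immediate consequence of the same identity $\on{Im}Z([V])=c_0\on{Im}(z)$, since the case $\on{Im}(z)=0$ never produces the heart $\cohW$. The remaining parts of your write-up (pinning bundle phases into $[0,1]$ as in the proof of \Cref{pp:part}, and the concentrated case via hereditariness and the classification of hereditary hearts) are sound and consistent with the paper's argument.
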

\begin{proof}
As in the proof of Lemma~\ref{pp:part}, we found that the phase of any indecomposable vector bundle lies in $[0,1]$. Denote by $z:=Z([P_0])$. (Recall that $P_0$ is the projective corresponding to the vertex $V_0$ in \eqref{eq:pqr}.)

\paragraph{\textbf{Case I}}
If $\on{Im}z>0$, i.e. $\phi_\sigma(P_0)\in(0,1)$, we claim that
$
    \sli(0,1)=\on{vect}(\wpl),
$
or equivalently, the phase of any indecomposable vector bundle $V$ lies in $(0,1)$.

Note that $K(\DQ)$ admits a basis
\[
    \{ [P_0], \delta=[S^{\lambda_0}] \} \bigsqcup \{ [S_i^j] \mid  1\le i\le l, 1\le j\le w_i-1\}.
\]
Then, for any indecomposable vector bundle $V$, we have
\begin{gather}\label{eq:formula-Z}
    [V]
        =c_0 [P_0] + c_\delta \delta + \sum_{i=1}^{l} \sum_{j=1}^{w_i-1} c_i^j [S_i^j]
\end{gather}
for some $c_0,c_\delta,c_i^j\in\ZZ$.
It implies that
\begin{equation}\label{eq:ImZ}
  \begin{cases}
    \on{Im} Z([V])= c_0\on{Im}(z),\\
    \on{Re} Z([V])
        =c_0 \on{Re}(z) - c_\delta - \sum_{i=1}^{l} \sum_{j=1}^{w_i-1} c_i^j \mu_i^j.
  \end{cases}
\end{equation}
Moreover, we can replace $[P_0]$ in the basis with $[V]$ for any chosen $V$. Thus we have $c_0\ne0$.
As $\phi_\sigma(V)\in[0,1]$, we deduce that $c_0>0$ and hence $\phi_\sigma(V)\in(0,1)$ as claimed.
Therefore,
\[
    \h_\sigma=\sli(0,1]=\< \sli(0,1),\sli(1)\>=\< \on{vect}(\wpl), \on{coh}_0(\wpl) \>=\cohW.
\]

\paragraph{\textbf{Case II}}
If $\on{Im}z=0$, then $\on{Im}Z([V])=0$, by \eqref{eq:ImZ},
for any indecomposable vector bundle $V$.
Thus we have $Z([V])\in\RR^*$ and either
$$\phi_\sigma(V)=0\,(\Leftrightarrow Z ([V])>0)
    \quad\text{or}\quad \phi_\sigma(V)=1\,(\Leftrightarrow Z([V])<0).$$
By \Cref{lem:easy}, $\phi_\sigma(\tau V)\le \phi_\sigma( V)$. Then 
we have either
\[\phi_\sigma(\tau V)=\phi_\sigma(V)\in\{0,1\} \quad\text{or}\quad 0=\phi_\sigma(\tau V)<\phi_\sigma( V)=1.\]
Considering each $\tau$-orbit $\tau^\ZZ V$ in $\on{AR} \on{vect}(\wpl)$, where $V$ is an indecomposable vector bundle $V$, we know that there exists some $j\in\ZZ$ such that
\[
    \ldots=\phi_\sigma(\tau^{j+2} V)=\phi_\sigma(\tau^{j+1} V)=0<1=\phi_\sigma(\tau^{j} V)=\phi_\sigma(\tau^{j-1} V)=\ldots.
\]
Take all such $\tau^j V$'s for all $\tau$-orbits which we claim form a section $\SEC$ of $\on{vect}(\wpl)\cong\ZZ\EQ$.
Again by \Cref{lem:easy}, we can deduce that for the neighbour $\tau$-orbits,
the corresponding $\tau^j V$'s must be nearby (i.e. connected by an arrow).
Then we have
\[
  \h_\sigma=\sli(0,1]=\sli(1)=\<\tau^{\le0}\SEC, \on{coh}_0(\wpl), \tau^{>0}\SEC[1]  \>  \cong
    \on{mod}\CC \SEC^{\on{op}}
\]
where $\SEC^{\on{op}}$ is the opposite quiver of $\SEC$.
Note that $\cohW<\h_\sigma<\cohW[1]$ in this case.

Combining the discussion above, the proposition follows.
\end{proof}
The first case above is also showed in \cite[Prop.~3.10]{Ota}.

\subsection{Total semi-stability datum}\

\begin{definition}\label{def:datum}
We call a set of partitions \eqref{eq:part} together with $z\in\buhp$
a \emph{totally semi-stable datum}, denoted by $\TSD=(\mu_?^?,z)$.
It determines a central charge $Z$ on $\DQ$ by
\begin{equation}\label{eq:central-c}
\begin{cases}
  Z([S_i^j])=-\mu_i^j,& \quad 1\le i\le \nw,j\in\ZZ_{w_i}. \\
  Z([P_0])=z.\end{cases}
\end{equation}
Such a datum $\TSD$ is \emph{non-degenerate} if
for any indecomposable vector bundle $V\in\on{vect}(\wpl)$, one has $Z([V])\ne0$.
\end{definition}
Note that, by \eqref{eq:ImZ}, the condition $\on{Im}(z)>0$ implies that the totally semi-stable datum $\TSD=(\mu_?^?,z)$ is non-degenerate.

By Lemma~\ref{pp:part}, a stability condition $\sigma\in\PToSS(\wpl)$ determines a non-degenerate totally semi-stable datum $\TSD$. However, the converse statement is not true in general.


Let $\arg$ be the function on $\buhp$ with codomain $[0,\pi]$.
A non-degenerate totally semi-stable datum $\TSD$ also determines a slicing $\sli$ by letting
all indecomposable object $V[m]$ lie in the slice $\sli(\phi+m)$ for any $V\in\cohW, m\in\ZZ$,
where $\phi=\pi^{-1}\arg Z(V)$.

Similar to the total stability case (cf. \cite{QZ}), we have the following criterion of total semi-stability.
\begin{proposition}\label{pp:AR}
A non-degenerated totally semi-stable datum $\TSD$ induces a total semi-stability condition $\sigma=(Z,\sli)$ as above if and only if
\begin{itemize}
  \item[$\bigstar$] for every arrow $X \to Y$ in the 
    preprojective (or vector bundle) component $\on{AR}\on{vect}(\CC\PP^1_\wt)$ of the AR-quiver of $\DQ$, 
    one has $\phi_\sigma(X)\le\phi_\sigma(Y)$.
\end{itemize}
\end{proposition}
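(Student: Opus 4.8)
The plan is to prove the ``only if'' direction straight from the axioms and to reduce the ``if'' direction to a single orthogonality statement between indecomposable vector bundles. For ``only if'', if $\sigma=(Z,\sli)$ is a total semi-stability condition then every indecomposable is semistable, and an arrow $X\to Y$ in $\on{AR}\on{vect}(\wpl)$ gives a nonzero irreducible morphism, so the Hom-vanishing axiom of \Cref{def:stab} forces $\phi_\sigma(X)\le\phi_\sigma(Y)$; this is exactly \Cref{lem:easy}. For ``if'', assume $\bigstar$. Since every indecomposable of $\DQ$ is a shift of an indecomposable coherent sheaf and the datum places it in a single slice of $\sli$, and $\DQ$ is hereditary and Krull--Schmidt, the HN-filtration of an arbitrary nonzero object is obtained by splitting off its indecomposable summands in order of descending phase; the first axiom of \Cref{def:stab} holds because the datum is non-degenerate (so $Z$ is nonzero on every indecomposable), and the support property comes free. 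Hence the whole content is the Hom-orthogonality $\phi_\sigma(A_1)>\phi_\sigma(A_2)\Rightarrow\Hom(A_1,A_2)=0$ for indecomposable $A_1,A_2$, and by the periodicity $\sli(\phi+1)=\sli(\phi)[1]$ and hereditariness one may take $A_1,A_2\in\cohW$ and control only $\Hom(A_1,A_2)$ and $\Ext^1(A_1,A_2)$.

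Next I would pin down the phases. By \eqref{eq:central-c} all of $Z([S_i^j])$ and $Z([S^\lambda])$ lie in $\RR_{<0}$, so every indecomposable torsion sheaf, being an iterated extension of exceptional or usual simples inside one tube, has $Z\in\RR_{<0}$ and therefore lies in $\sli(1)$; and by \eqref{eq:ImZ} an indecomposable vector bundle $V$ of rank $r\ge1$ satisfies $\on{Im}Z([V])=r\cdot\on{Im}(z)\ge0$, so $Z([V])\in\buhp$ and $\phi_\sigma(V)\in[0,1]$. Granting these bounds, Serre duality $\Ext^1(A_1,A_2)\cong\Hom(A_2,\tau A_1)^*$ together with $\Hom(\on{coh}_0(\wpl),\on{vect}(\wpl))=0$ settles every configuration of $(A_1,A_2)$ involving a torsion sheaf, as well as $\Ext^1$ between two vector bundles: in each such case the offending $\Hom$ or $\Ext^1$ either already vanishes, or its orthogonality hypothesis ($\phi_\sigma(A_1)>\phi_\sigma(A_2)$, resp.\ $\phi_\sigma(A_1)>\phi_\sigma(A_2)+1$) is incompatible with the phase bounds just established.

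Everything therefore reduces to the remaining case: for indecomposable vector bundles $A_1,A_2$ with $\Hom(A_1,A_2)\ne0$, one has $\phi_\sigma(A_1)\le\phi_\sigma(A_2)$. Here I would invoke that, in the tame case, the vector bundle component $\on{AR}\on{vect}(\wpl)\cong\ZZ\EQ$ is standard, so a nonzero morphism between two of its indecomposables is a finite sum of compositions of irreducible morphisms; in particular $\Hom(A_1,A_2)\ne0$ produces an oriented path $A_1\to\cdots\to A_2$ inside $\on{AR}\on{vect}(\wpl)$, and applying $\bigstar$ along it yields $\phi_\sigma(A_1)\le\phi_\sigma(A_2)$. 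This completes the verification that $\sli$ is a slicing, and since each indecomposable sits in a single slice, $\sigma$ is then automatically a total semi-stability condition.

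The main obstacle is precisely this standardness input: because $\on{AR}\on{vect}(\wpl)$ is infinite, one must rule out a ``$\on{rad}^\infty$'' phenomenon, i.e.\ nonzero morphisms between vector bundles not witnessed by finite paths, and this is exactly where the domestic/tame hypothesis enters (it would fail in the wild case). I would handle it by appealing to the structure theory of $\cohW$ (cf.\ \cite{GL,Rin}), using that the arrows of $\ZZ\EQ$ are monotone along the $\ZZ$-direction so that each Hom space is exhausted by paths of bounded length; an alternative is to transport the claim to the transjective component of the derived category of the associated tame hereditary algebra, whose standardness is classical.
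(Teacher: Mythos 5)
Your argument is correct but follows a genuinely different route from the paper's. You build the slicing directly from the datum and verify Bridgeland's axioms by hand, reducing everything --- via the phase bounds $\phi_\sigma(\on{coh}_0(\wpl))=\{1\}$ and $\phi_\sigma(\on{vect}(\wpl))\subset[0,1]$ (your observation that the coefficient $c_0$ in \eqref{eq:formula-Z} equals $\rank V$ is a cleaner justification than the paper's basis-exchange argument), together with Serre duality --- to the single claim that $\Hom(V,W)\ne0$ for indecomposable bundles forces $\phi_\sigma(V)\le\phi_\sigma(W)$, which you deduce from standardness of the component $\on{AR}\on{vect}(\wpl)\cong\ZZ\EQ$. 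The paper instead splits into the cases $\on{Im}(z)>0$ and $\on{Im}(z)=0$: in the first it observes that $Z$ is a stability function on the heart $\cohW$ and invokes \cite[Prop.~5.3]{B1}, checking semistability of indecomposables through the subobject criterion; in the second it reruns the Case II argument of \Cref{pp:heart} to exhibit a hereditary module-category heart concentrated in a single slice. What your approach buys is uniformity (no case split on $\on{Im}(z)$) and an explicit identification of where tameness enters; what it costs is that the $\on{rad}^\infty$-vanishing for the bundle component, which you rightly flag as the main obstacle, must actually be supplied. It does hold in the domestic case: a nonzero composite between indecomposable bundles factors only through bundles of intermediate slope, of which there are finitely many in any slope interval (finitely many $\tau$-orbits and $\deg\Vecw<0$), and all indecomposable bundles are bricks, so radical chains terminate. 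Your ``alternative'' via the transjective part of the module category is more delicate, since there $\Hom(\text{preprojective},\text{preinjective})$ genuinely contains $\on{rad}^\infty$-morphisms. It is worth noting that the paper's own phrase ``Condition $\bigstar$ implies that [a vector bundle] must be semi-stable'' quietly relies on essentially the same fact, since a subobject of a bundle is detected only through nonzero $\Hom$'s from its indecomposable summands.
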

\begin{proof}
The `only if' part is straightforward. 
For the `if' part, we will discuss in the following two cases.

If $\on{Im}(z)>0$, 
the central charge $Z$ is in fact a stability function on the heart $\h_\sigma=\cohW$ by \eqref{eq:ImZ}.
By definition, an (indecomposable) object $E$ in $\cohW$ is semi-stable if and only if the phase of
any its submodule is less or equal than its phase. If $E$ is a vector bundle, Condition~$\bigstar$
implies that it must be semi-stable. If $E$ is in a tube, it has the biggest phase (i.e. one)
and hence is also semi-stable. Thus $Z$ is totally semi-stable (on $\cohW$).
By \cite[Prop.~5.3]{B1}, it is a stability condition $\sigma=(Z,\cohW)$ on $\DQ$.
Clearly, $\sigma$ is also totally semi-stable.

If $\on{Im}(z)=0$, one can use Condition~$\bigstar$ and the same argument of the proof of Case \textbf{II} in \Cref{pp:heart}. It implies that
there is a heart $\h$ of the form $\on{mod}\CC \SEC^{\on{op}}$ such that $\h=\sli(1)$.
This implies that $\sigma$ is a concentrated stability condition. Since $\h$ is hereditary, $\sigma$ is semi-stable.
This completes the proof.
\end{proof}

In the following sections,
we will write down explicit inequalities for Condition~$\bigstar$ in Proposition~\ref{pp:AR} for each Euclidean case. 

\section{The Euclidean case: affine type A}\

For type $\Apq$, we have $\nw=2$ and $(w_1,w_2)=(p,q)$. The partitions \eqref{eq:part} become
\begin{equation}\label{eq:part-A}
\begin{cases}
  1=\mu_1^1+\cdots+\mu_1^p,\\
  1=\mu_2^1+\cdots+\mu_2^q.
\end{cases}
\end{equation}

\begin{figure}[tbh]  \centering
\begin{tikzpicture}[scale=1.5]
\foreach \x in {0,5}{
\begin{scope}[shift={(\x-.5,0)},yscale=1.25]
\pgfmathsetmacro{\sj}{.5}
\draw[white]
    (0,0) node[circle,draw=Emerald] (v0\x) {$00$}
    (1-\sj,1+\sj) node[circle,draw=Emerald] (v1\x) {$00$}
    (2-\sj,2+\sj) node[circle,draw=Emerald] (v2\x) {$00$}
    (3,3) node[circle,draw=Emerald] (v3\x) {$00$}
    (1.5+\sj+\sj,1.25) node[circle,draw=Emerald] (w\x) {$00$};
\end{scope}
\draw[-stealth,very thick,blue!60,opacity=.5]
    (v0\x)edge(v1\x) (v0\x)edge(w\x) (v1\x)edge(v2\x) (v2\x)edge(v3\x) (w\x)edge(v3\x);
}
\draw[-stealth,very thick,blue!60]
    (v05)edge(v15) (v05)edge(w5) (v15)edge(v25) (v25)edge(v35) (w5)edge(v35);

\draw[dashed, orange,stealth-](v00)edge(v05)(v10)edge(v15)(v20)edge(v25)(v30)edge(v35)(w0)edge(w5);
\draw[opacity=.5] (v00) node[]{$\tau P_0$} (v10) node[]{$\tau P_1^1$} (v20)node[]{$\tau P_1^2$}
    (v30)node[]{$\tau P_\infty$} (w0) node[]{$\tau P_2^1$};
\draw (v05) node[]{$P_0$} (v15) node[]{$P_1^1$} (v25)node[]{$P_1^2$}
    (v35)node[]{$P_\infty$} (w5) node[]{$P_2^1$};
\draw[stealth-,very thick,Emerald]
    (v05)edge(v10) (v05)edge(w0) (v15)edge(v20) (v25)edge(v30) (w5)edge(v30);

\begin{scope}[shift={(5,0)}]
\draw[font=\small,blue!60]
    (-.45,1)node{$\mu_1^1$} (.4,2.7)node{$\mu_1^2$} (1.65,3.6)node{$\mu_1^3$}
    (1,.7)node{$\mu_2^1$} (2.4,2.5)node{$\mu_2^2$};
\end{scope}
\draw[font=\small,blue!60,opacity=.5]
    (-.45,1)node{$\mu_1^3$} (.4,2.7)node{$\mu_1^1$} (1.65,3.6)node{$\mu_1^2$}
    (1,.7)node{$\mu_2^2$} (2.4,2.5)node{$\mu_2^1$};
\draw[font=\small,Emerald]
    (1.9,.9)node{$\mu_2^2$} (3,2.3)node{$\mu_2^2$} (4.5,3.6)node{$\mu_2^2$}
    (3.3,1.0)node{$\mu_1^3$} (4.6,2.5)node{$\mu_1^3$};
\end{tikzpicture}
\caption{Part of the AR-quiver of $\DQ\cong\D(\canA)$: type $\wh{A_{3,2}}$}\label{fig:AR-A}
\end{figure}

\begin{theorem}\label{thm:A}
To give a stability condition in $\PToSS(\Apq)$ is equivalent
to give a non-degenerate totally semi-stable datum $\TSD$.
\end{theorem}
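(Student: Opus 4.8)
The statement to prove is Theorem~\ref{thm:A}: for type $\Apq$, giving a stability condition in $\PToSS(\Apq)$ is the same as giving a non-degenerate totally semi-stable datum $\TSD$. By \Cref{pp:part}, a stability condition in $\PToSS(\wpl)$ always produces a non-degenerate datum, so the content is the converse: every non-degenerate datum in type A induces a total semi-stability condition. By \Cref{pp:AR}, this reduces to verifying Condition~$\bigstar$, i.e.\ that along every arrow $X\to Y$ in $\on{AR}\on{vect}(\wpl)$ one has $\phi_\sigma(X)\le\phi_\sigma(Y)$. So the plan is: make Condition~$\bigstar$ completely explicit in type A and check it holds automatically, with \emph{no} further inequalities needed (unlike affine D/E, where extra inequalities \eqref{eq:D} etc.\ are imposed).

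First I would set up coordinates on the vector bundle component. Using the triangle equivalence \eqref{eq:tri-equiv}, $\on{AR}\on{vect}(\wpl)\cong\ZZ\EQ$ where $\EQ$ is the affine type $A_{p,q}$ quiver, and the preprojective slice is identified with the $P_?^?$'s as in \Cref{fig:AR-A}. The two arms of the star quiver $Q_\wt$ correspond to the two tubes $\tube_1$ (rank $p$) and $\tube_2$ (rank $q$). The key combinatorial fact from \eqref{eq:tube-i} and the last two bullet points of the ``partitions of the imaginary root'' paragraph is that the arrow $V_i^{j-1}\to V_i^j$ (i.e.\ the map $\OO((j-1)\vec{x_i})\to\OO(j\vec{x_i})$ and all its $\tau$-translates) has cone/mapping-cone class $[S_i^j]$, so that
\[
    Z(\text{target})-Z(\text{source}) = -\mu_i^j
\]
for the corresponding arrow, and $Z(\tau X)-Z(X)=Z(\delta)=-1$ along the AR-translation arrows $V_0\to\tau P_\infty$ of the ``backbone.'' Translating into phases: since everything relevant lives in a half-plane where $\arg$ is monotone, an arrow $X\to Y$ with $Z(Y)=Z(X)-\mu_i^j$ and both central charges in $\uhp$ automatically satisfies $\arg Z(Y)\ge \arg Z(X)$ provided $\mu_i^j>0$ — which holds because the $\mu_i^j$ are positive reals by the definition of a datum. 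I would state this monotonicity as a small lemma: if $z\in\uhp$ and $t>0$ then $z-t\in\uhp$ and $\arg(z-t)\ge\arg(z)$ (the subtlety being only at the boundary ray $\RR_{<0}$, where one uses the convention $\arg=\pi$ and non-degeneracy to rule out $Z(V)=0$).

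The main step is then the inductive propagation across the \emph{whole} component $\ZZ\EQ$, not just the fundamental slice. I would argue that every arrow of $\on{AR}\on{vect}(\wpl)$ is, up to $\tau$-shift, one of finitely many ``model'' arrows appearing in one fundamental domain (the arrows within the two arms, the two connecting arrows at $V_0$ and at $V_\infty$, and the AR-mesh arrows), and for each the displacement of $Z$ is $-\sum$ of a nonempty sub-multiset of the $\mu_i^j$'s — hence a strictly negative real added to $Z(X)$. One then needs to check two things simultaneously by induction on ``depth'' in the preprojective component: (a) that $Z$ of every indecomposable vector bundle stays in $\uhp$ (so $\arg$ is defined and monotonicity applies), and (b) that consequently $\phi_\sigma$ is non-decreasing along every arrow. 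Point (a) is where non-degeneracy is used in an essential way: it guarantees $Z([V])\ne 0$, so $Z([V])$ never crosses into the lower half-plane (it can only slide along $\RR_{<0}$ at worst), and the $\on{Im}$ part is controlled by $\on{Im}(z)$ via \eqref{eq:ImZ} exactly as in the proof of \Cref{pp:heart}.

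I expect the main obstacle to be the bookkeeping at the two ``branch points'' $V_0$ and $V_\infty$ of the star, where an indecomposable vector bundle has two incoming (resp.\ outgoing) arrows from the two arms, and one must check the phase inequality holds along \emph{both} and that the two arms' contributions $\mu_1^\bullet$ and $\mu_2^\bullet$ reconcile correctly with the backbone step of size $Z(\delta)=-1$ — this is exactly the place where $1=\mu_1^1+\cdots+\mu_1^p=\mu_2^1+\cdots+\mu_2^q$ is used. In affine D/E the analogous reconciliation fails without extra hypotheses, producing the inequalities \eqref{eq:D}, \eqref{eq:AR-E6}–\eqref{eq:AR-E8}; the point of Theorem~\ref{thm:A} is that with only two arms (and the degenerate third arm $w_3=1$ contributing nothing, per the convention that the $w_3=1$ branch of \eqref{eq:pqr} vanishes) the partition identities \eqref{eq:part-A} make Condition~$\bigstar$ hold for free. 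Once $\bigstar$ is verified, \Cref{pp:AR} gives the stability condition $\sigma=(Z,\sli)$ directly, and $\sigma\in\PToSS(\Apq)$ after the normalization of \Cref{con1}; combined with the (already noted) forward direction from \Cref{pp:part}, the two constructions are mutually inverse, proving the asserted equivalence.
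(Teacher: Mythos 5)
Your proposal is correct and follows essentially the same route as the paper: reduce the converse direction to Condition~$\bigstar$ via \Cref{pp:AR}, observe that every arrow of $\on{AR}\on{vect}(\wpl)$ has central-charge displacement $-\mu_i^j<0$ (a single exceptional-simple class, including for the connecting/$\tau$-translated arrows), and conclude phase monotonicity from the monotonicity of $\arg$ on $\buhp$, with \eqref{eq:ImZ} and non-degeneracy guaranteeing $Z([V])\in\buhp\setminus\{0\}$. The extra inductive bookkeeping and branch-point reconciliation you anticipate are not needed, since the single-$\mu$ displacement already holds uniformly for all arrows up to $\tau$-shift, which is exactly the paper's (shorter) argument.
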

\begin{proof}
By \Cref{pp:AR}, we only need to check Condition~$\bigstar$ for a non-degenerate totally semi-stable datum $\TSD$.

For the arrows between projectives of the canonical algebra $\on{C}_\wt$,
we have 
\[
    Z([P_i^j])-Z([P_i^{j-1}])=Z([S_i^j])=-\mu_i^j<0, \  1\le i\le l, j\in\ZZ_{w_i}.
\]
As their phases are in $[0,1]$, we see that $\phi_\sigma(P_i^j)\ge\phi_\sigma(P_i^{j-1})$.
For any connecting arrows from $\Inj\canA[-1]$ to $\Proj\canA$, we have
\[\begin{cases}
    Z([P_1^{j-1}])-Z([\tau P_1^j])=Z([S_2^q])=-\mu_2^q<0,\\
    Z([P_2^{j-1}])-Z([\tau P_2^j])=Z([S_1^p])=-\mu_1^p<0.
  \end{cases}
\]
As above, we deduce that $\phi_\sigma(P_1^{j-1})\ge\phi_\sigma(\tau P_1^j)$ and $\phi_\sigma(P_2^{j-1})\ge\phi_\sigma(\tau P_2^j)$.

These are all the arrows up to $\tau$ in $\on{AR}\DQ$ and the calculations above are applicable under $\tau$ .
For instance, we have
\[
    Z([\tau^k P_i^j])-Z([\tau^k P_i^{j-1}])=Z([\tau^k S_i^j])=Z([S_i^{j+k}])=-\mu_i^{j+k}<0, \quad \forall k\in\ZZ.
\]
Thus, we are done.
\end{proof}


%
%
\begin{corollary}\label{cor:con-A}
Given any non-concentrated stability condition $\sigma_0\in\PToSS(\Apq)$,
there is a linearly contractible flow on $\PToSS(\Apq)$ that contracts to $\sigma_0$.
In particular, $\ToSS(\Apq)$ is contractible.
\end{corollary}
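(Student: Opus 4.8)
The goal is to produce, for any chosen non-concentrated $\sigma_0\in\PToSS(\Apq)$, a linear contraction of the whole space onto $\sigma_0$. By \Cref{thm:A} and \Cref{con1}, a point of $\PToSS(\Apq)$ is exactly a non-degenerate totally semi-stable datum $\TSD=(\mu_?^?,z)$, normalized so that $Z(\delta)=-1$ and $\phi_\sigma(S^{\lambda_0})=1$. So I would first set up explicit coordinates: the partition data $\mu_1^1,\ldots,\mu_1^p$ and $\mu_2^1,\ldots,\mu_2^q$ (each a tuple of positive reals summing to $1$), together with $z=Z([P_0])\in\buhp$. These coordinates live in the convex set
\[
    C=\Delta_{p-1}^\circ\times\Delta_{q-1}^\circ\times\buhp,
\]
where $\Delta_{k}^\circ$ denotes the open $k$-simplex (positivity of each $\mu_i^j$) and $\buhp=\{z\in\CC^*:\on{Im}z\ge0\}$; the only extra constraint is the non-degeneracy condition $Z([V])\ne0$ for all indecomposable vector bundles $V$. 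The key structural point, which I would emphasize, is that \Cref{thm:A} says \emph{no further inequalities are needed} in type $\Apq$, so $\PToSS(\Apq)$ is cut out inside a convex set purely by the open non-degeneracy conditions.

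Next I would argue that $\PToSS(\Apq)$ is itself convex, or at least star-shaped with respect to $\sigma_0$, by choosing the contraction to be the straight-line homotopy in these coordinates:
\[
    \sigma_t=(1-t)\cdot(\mu_?^?,z)+t\cdot(\mu_{0,?}^?,z_0),\qquad t\in[0,1],
\]
where $\sigma_0=(\mu_{0,?}^?,z_0)$. Convex combinations of partitions are partitions (positivity and the sum-to-one constraint are preserved), and $\buhp$ is convex, so $\sigma_t$ always lies in $C$; what must be checked is that $\sigma_t$ remains non-degenerate along the path, i.e. that no $Z_t([V])$ vanishes. Here is where the hypothesis that $\sigma_0$ is \emph{non-concentrated} enters: by \Cref{pp:heart}, non-concentrated means $\on{Im}z_0>0$, and the remark after \Cref{def:datum} notes that $\on{Im}z>0$ already forces non-degeneracy. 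Along $\sigma_t$ we have $\on{Im}z_t=(1-t)\on{Im}z+t\,\on{Im}z_0$, which is $>0$ for all $t\in(0,1]$ since $\on{Im}z_0>0$ and $\on{Im}z\ge0$; hence $\sigma_t$ is non-degenerate for $t\in(0,1]$. At $t=0$ we recover the original $\sigma$, which is non-degenerate by assumption. Thus $\sigma_t\in\PToSS(\Apq)$ for all $t$, the homotopy is jointly continuous (indeed linear) in $(t,\sigma)$, fixes $\sigma_0$, and deformation-retracts $\PToSS(\Apq)$ onto the point $\sigma_0$.

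Finally, to get contractibility of the non-projective space $\ToSS(\Apq)$, I would invoke the $\CC$-fibration $\ToSS(\Apq)\to\PToSS(\Apq)$: since the $\CC$-action is free and the quotient map is a (trivial, once a normalization/section is fixed as in \Cref{con1}) $\CC$-bundle, $\ToSS(\Apq)\simeq\PToSS(\Apq)\times\CC$ is homotopy equivalent to $\PToSS(\Apq)$, which we have just shown is contractible. Existence of a non-concentrated $\sigma_0$ (needed to run the argument at all) is exactly the parenthetical claim in \Cref{thm:0}: take any partitions and any $z$ with $\on{Im}z>0$, which by \Cref{thm:A} gives a point of $\PToSS(\Apq)$ with heart $\cohW$, hence non-concentrated by \Cref{pp:heart}.

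\textbf{Expected main obstacle.} The genuinely delicate point is verifying that the straight-line path stays inside $\PToSS(\Apq)$ — i.e. the non-degeneracy bookkeeping — and making sure the endpoint behavior ($t=0$) is handled correctly when the starting $\sigma$ itself happens to be concentrated (so $\on{Im}z=0$): one must check that even then $\sigma_t$ is non-degenerate for all $t>0$, which follows from $\on{Im}z_t>0$ on $(0,1]$, while non-degeneracy at $t=0$ is part of the given data. Everything else (convexity of the simplices and of $\buhp$, continuity, the $\CC$-bundle reduction) is routine. A secondary subtlety is just being careful that "linear" is meant in the $(\mu_?^?,z)$-coordinates and not in the central-charge coordinates on all of $K(\DQ)$ — though by \eqref{eq:central-c} the two are affinely related, so the distinction is cosmetic.
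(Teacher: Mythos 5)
Your proposal is correct and follows essentially the same route as the paper: linearly interpolate the totally semi-stable data $(\mu_?^?,z)$ toward $\sigma_0$, observe that partitions and $\buhp$ are preserved under convex combination, and use $\on{Im}z_0>0$ (equivalent to non-concentration) to guarantee non-degeneracy along the open part of the path, with Theorem~\ref{thm:A} supplying the fact that no further inequalities need checking in type $\Apq$. Your explicit handling of the passage from $\PToSS$ to $\ToSS$ via the $\CC$-action is slightly more detailed than the paper's, but the argument is the same.
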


\begin{proof}
Let $\TSD_0=(\lambda_?^?,z_0)$ denote the total semi-stability datum determined by $\sigma_0$. 
Since $\sigma$ is non-concentrated, then, by Proposition~\ref{pp:AR}, we have $\on{Im}(z_0)>0$.
Now we construct a flow on $\PToSS(\Apq)=\ToSS(\Apq)/\CC$ explicitly as follows.
Take any stability condition $\sigma\in\PToSS(\Apq)$ and $\TSD=(\mu_?^?,z)$ to be the corresponding total semi-stability datum determined by $\sigma$. Thus, we have $\on{Im}(z)\ge0$.
For any $t\in[0,1]$, define
\begin{equation}\label{eq:flow}
\begin{cases}
  \mu_i^j(t)=t\mu_i^j +(1-t) \lambda_i^j,& \quad 1\le i\le \nw,1\le j\le w_i. \\
  z(t)=tz+(1-t) z_0.
\end{cases}
\end{equation}
It is easy to check that each $\TSD(t):=(\mu_?^?(t),z(t))$ is a total semi-stability datum. Due to
the fact that 
\[
    \on{Im}z(t)=t\on{Im}(z)+(1-t)\on{Im}(z_0)>0, \quad\forall t\in[0,1),
\]
each $\TSD(t)$ is non-degenerate. 
By Theorem~\ref{thm:A}, each $\TSD(t)$ defines a stability condition $\sigma(t)\in\PToSS(\Apq)$.
It is clear that $\sigma(0)=\sigma_0$ and $\sigma(1)=\sigma$. This completes the proof.
\end{proof}

\begin{corollary}
$\Stab\D_\infty(\Apq)$ is contractible.
\end{corollary}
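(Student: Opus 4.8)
The plan is to obtain this as the composition of two facts, one imported and one just established. The imported ingredient is the partial contractibility result of \cite{Q3}: since $\Apq$ is Euclidean and not Dynkin, one has $\on{gd}\bigl(\D_\infty(\Apq)\bigr)=1$, the global dimension function $\on{gldim}$ attains its infimum exactly along $\ToSS(\Apq)=\on{gldim}^{-1}((0,1])=\on{gldim}^{-1}(\{1\})$, and the descending gradient-like flow of $\on{gldim}$ retracts $\Stab\D_\infty(\Apq)$ onto $\ToSS(\Apq)$. The second ingredient is \Cref{cor:con-A}: $\ToSS(\Apq)$ is contractible — indeed, via \Cref{thm:A} its projectivization is the space of non-degenerate totally semi-stable data, and the linear flow \eqref{eq:flow} contracts it onto any chosen non-concentrated $\sigma_0$, with contractibility passing between $\PToSS(\Apq)$ and $\ToSS(\Apq)$ because the $\CC$-bundle $\Stab\to\PStab$ is trivial.

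Given these, the argument is formal: a space that admits a deformation retraction onto a contractible subspace is itself contractible. Concretely I would first run the \cite{Q3} flow to retract $\Stab\D_\infty(\Apq)$ onto $\ToSS(\Apq)$, and then concatenate with the contraction of $\ToSS(\Apq)$ to $\sigma_0$ supplied by \Cref{cor:con-A}; by the pasting lemma the concatenated homotopy is continuous and contracts the whole space to $\sigma_0$. Note that the two flows need not be compatible in any way, since contractibility depends only on the homotopy type. If $\D_\infty(\Apq)$ is read as $\D^b(\CC\Apq)$ (equivalently $\DQ$, by \eqref{eq:tri-equiv2}), the triangle equivalence induces a homeomorphism of stability spaces and nothing further is needed.

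The only genuinely nontrivial step here is the \cite{Q3} deformation-retraction, which is imported; inside the present paper the substantive content is \Cref{cor:con-A}, already proven. The one point to verify carefully is the compatibility of the two descriptions of $\ToSS(\Apq)$ — that the subspace onto which the \cite{Q3} flow retracts is exactly the locus $\on{gldim}=1$ treated here — and that ``contracts to'' in \cite{Q3} indeed delivers a (not necessarily strong) deformation retraction onto that subspace rather than a mere abstract homotopy equivalence. Granting that, the remainder is bookkeeping, so I expect the write-up to be short.
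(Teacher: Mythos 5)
Your argument is exactly the paper's: import from \cite{Q3} the contraction of $\Stab\D_\infty(\Apq)$ onto $\ToSS(\Apq)$ and compose with the contraction of \Cref{cor:con-A}. The extra caveats you flag (deformation retraction vs.\ homotopy equivalence, passing between $\ToSS$ and $\PToSS$ via the free $\CC$-action) are sensible but the paper treats them as implicit; no gap.
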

\begin{proof}
By \cite{Q3}, we know that $\Stab\D_\infty(\Apq)$ contracts to $\ToSS(\Apq)$. 
Then the statement follows from \Cref{cor:con-A}.
\end{proof}

\section{The Euclidean case: affine type D}

For type $\ADn$, we have $l=3$ and $(w_1,w_2,w_3)=(2,2,n-2)$. The partitions \eqref{eq:part} become
\begin{equation}\label{eq:part-D}
\begin{cases}
  1=\mu_1^1+\mu_1^2,\\
  1=\mu_2^1+\mu_2^2,\\
  1=\mu_3^1+\cdots+\mu_3^{n-2}.
\end{cases}
\end{equation}
One can find the tilting module for the canonical quiver \eqref{eq:pqr} into the AR-quiver as shown in \Cref{fig:AR-D} (the blue shaded circles for the $n=6$ case).

\begin{figure}  \centering  \makebox[\textwidth][c]{
\begin{tikzpicture}[scale=1.9]
\pgfmathsetmacro{\hh}{.3}
\foreach \y in {-2+\hh-\hh,-2-\hh-\hh,-1,0,1,2+\hh-\hh,2-\hh-\hh}{
\draw[orange,dashed](-4,\y)to(4,\y);}
\foreach \x in {-4,-2,0,2}{
\begin{scope}[shift={(\x,0)}]
\draw[white,font=\tiny]
    (0,0) node[circle,draw=Emerald,fill=white] (v0) {$0$}
    (1,1) node[circle,draw=Emerald,fill=white] (v1) {$0$}
    (0,2+\hh-\hh) node[circle,draw=Emerald,fill=white] (w1) {$0$}
    (0,2-\hh-\hh) node[circle,draw=Emerald,fill=white] (u1) {$0$}
    (1,-1) node[circle,draw=Emerald,fill=white] (v2) {$0$}
    (0,-2+\hh-\hh) node[circle,draw=Emerald,fill=white] (w2) {$0$}
    (0,-2-\hh-\hh) node[circle,draw=Emerald,fill=white] (u2) {$0$};
\end{scope}
\draw[-stealth,ultra thick,Emerald,opacity=.69]
    (v0)edge(v1) (u1)edge(v1) (w1)edge(v1)
    (v0)edge(v2) (u2)edge(v2) (w2)edge(v2);}
\begin{scope}[xscale=-1]
\foreach \x in {-4,-2,0,2}{
\begin{scope}[shift={(\x,0)}]
\draw[white,font=\tiny]
    (0,0) node[circle,draw=Emerald,fill=white] (v0) {$0$}
    (1,1) node[circle,draw=Emerald,fill=white] (v1) {$0$}
    (0,2+\hh-\hh) node[circle,draw=Emerald,fill=white] (w1) {$0$}
    (0,2-\hh-\hh) node[circle,draw=Emerald,fill=white] (u1) {$0$}
    (1,-1) node[circle,draw=Emerald,fill=white] (v2) {$0$}
    (0,-2+\hh-\hh) node[circle,draw=Emerald,fill=white] (w2) {$0$}
    (0,-2-\hh-\hh) node[circle,draw=Emerald,fill=white] (u2) {$0$};
\end{scope}
\draw[stealth-,ultra thick,Emerald,opacity=.69]
    (v0)edge(v1) (u1)edge(v1) (w1)edge(v1)
    (v0)edge(v2) (u2)edge(v2) (w2)edge(v2);}
\end{scope}

\draw[cyan!20]
    (-4,-2+\hh-\hh) node[circle,draw=Emerald,fill=cyan!20](P0){$0$}
    (4,-2+\hh-\hh) node[circle,draw=Emerald,fill=cyan!20](P8){$0$}
    (0,-2+\hh-\hh) node[circle,draw=Emerald,fill=cyan!20](P4){$0$}
    (-2,-2-\hh-\hh) node[circle,draw=Emerald,fill=cyan!20](P2){$0$}
    (2,-2-\hh-\hh) node[circle,draw=Emerald,fill=cyan!20](P6){$0$}
    (0,2+\hh-\hh) node[circle,draw=Emerald,fill=cyan!20](P1){$0$}
    (0,2-\hh-\hh) node[circle,draw=Emerald,fill=cyan!20](P7){$0$};
\draw(P0)node{$P_0\,$}(P8)node{$P_\infty$}(P4)node{$P_3^2$}(P2)node{$P_3^1$}(P6)node{$P_3^3$}
    (P1)node{$P_1^1$}(P7)node{$P_2^1$}
    (-3,-1) node[gray]{$X_1$}         (-2.5-.15,-.5+.15)node[Emerald]{$\mu_3^2$}
    (-2,0) node[gray]{$X_2$}         (-1.5-.15,.5+.15)node[Emerald]{$\mu_3^3$}
    (-1,1) node[gray]{$X_3$}
    (-4,-2-\hh-\hh)node[gray]{$X_0$};
\draw[-stealth,very thick,blue,opacity=.99]
    (P0)edge node[below]{$\mu_3^1$}(P2)
    (P2)edge node[below]{$\mu_3^2$}(P4)
    (P4)edge node[below]{$\mu_3^3$}(P6)
    (P6)edge node[below]{$\mu_3^4$}(P8)
    (P0)edge[bend left=13]  node[above]{$\mu_1^1$}(P1)
    (P1)edge[bend left=13] node[above]{$\mu_1^2$}(P8)
    (P0)edge[bend left=-10] node[below]{$\mu_2^1$}(P7)
    (P7)edge[bend left=-10] node[below]{$\mu_2^2$}(P8);
\end{tikzpicture}}
\caption{Part of the AR-quiver of $\D(\canA)$: type $\wh{D_6}$}\label{fig:AR-D}
\end{figure}

\begin{definition}\label{def:D}
A totally semi-stable datum $\TSD=(\mu_?^?,z)$  is of type $\ADn$ if
\begin{equation}\label{eq:D}
\begin{cases}
  |\mu_1^1-\mu_2^2|=|\mu_1^2-\mu_2^1|\le\mu_3^j,\\
  |\mu_1^1-\mu_2^1|=|\mu_1^2-\mu_2^2|\le\mu_3^j,
\end{cases}\quad 1\le j\le n-2 (=w_3)
\end{equation}.
\end{definition}

\begin{theorem}\label{thm:D}
To give a stability condition in $\PToSS(\ADn)$ is equivalent
to give a non-degenerate totally semi-stable datum of type $\ADn$.
Moreover, there exists a non-degenerate totally semi-stable datum of type $\ADn$.
\end{theorem}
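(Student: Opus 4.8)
The plan is to follow the same strategy as in \Cref{thm:A}. By \Cref{pp:AR}, a non-degenerate totally semi-stable datum $\TSD$ defines a stability condition in $\PToSS(\ADn)$ exactly when Condition~$\bigstar$ holds; conversely any $\sigma\in\PToSS(\ADn)$ already satisfies Condition~$\bigstar$ and, by \Cref{pp:part}, produces such a datum. Since a datum fixes the central charge by \eqref{eq:central-c}, and the central charge in turn forces the slicing (every torsion object has phase $1$ and every indecomposable vector bundle $V$ has phase $\pi^{-1}\arg Z([V])$), the two assignments are mutually inverse. So the equivalence in the theorem reduces to the single claim that, for a non-degenerate datum, Condition~$\bigstar$ is equivalent to the inequalities \eqref{eq:D}. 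As a preliminary simplification I would note that the equalities in \eqref{eq:D} are automatic from \eqref{eq:part-D}: writing $\mu_1^2=1-\mu_1^1$, $\mu_2^2=1-\mu_2^1$ gives $\mu_1^2-\mu_2^1=-(\mu_1^1-\mu_2^2)$ and $\mu_1^2-\mu_2^2=-(\mu_1^1-\mu_2^1)$, so the genuine content of \eqref{eq:D} is $\max\{|\mu_1^1-\mu_2^1|,\,|\mu_1^1-\mu_2^2|\}\le\mu_3^j$ for all $1\le j\le n-2$.

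Next I would reduce Condition~$\bigstar$ to a finite check. Because $\on{AR}\on{vect}(\wpl)\cong\ZZ\ADn$ and all phases in play lie in $[0,1]$, it suffices to verify $\phi_\sigma(X)\le\phi_\sigma(Y)$ on one $\tau$-orbit representative of each arrow, and the bookkeeping is cleanest relative to the tilting configuration $\{P_0,P_i^j,P_\infty\}$ of \Cref{fig:AR-D}. The arrows along the three arms pose no problem: $Z([P_i^j])-Z([P_i^{j-1}])=Z([S_i^j])=-\mu_i^j<0$, and a negative real increment keeps the argument weakly increasing on the closed upper half-plane, so $\phi_\sigma(P_i^{j-1})\le\phi_\sigma(P_i^j)$ with no constraint imposed, exactly as in \Cref{thm:A}. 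What remains are the arrows inside the two mesh-fans around the trivalent vertices $V_0$ and $V_\infty$ of $\ZZ\ADn$, which involve the ``extra'' indecomposable bundles $X_\bullet$ indicated in \Cref{fig:AR-D} (including the higher-rank ones appearing in those meshes) together with the connecting AR-arrows there.

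The core of the argument is the analysis of these two fans, and this is the step I expect to be the real obstacle. Here I would compute the classes in $K(\DQ)$ of the relevant bundles $X_\bullet$ by propagating outward from the tilting configuration using the mesh (AR-triangle) relations $[\tau Y]+[Y]=\sum_{X\to Y}[X]$, evaluating $Z$ via $[\OO(j\vec{x_i})]-[\OO((j-1)\vec{x_i})]=[S_i^j]$, $\delta=\sum_j[S_i^j]$ and $[P_\infty]-[P_0]=\delta$. The expected outcome is that across each ``transverse'' arrow of a fan the central charge changes by a real quantity of the form $\pm(\mu_1^a-\mu_2^b)$, while the adjacent arrow along the long $w_3$-arm changes it by $-\mu_3^j$; the half-plane estimate (phases in $[0,1]$) then forces $\phi_\sigma(X)\le\phi_\sigma(Y)$ there precisely when $|\mu_1^a-\mu_2^b|\le\mu_3^j$. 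Carrying this out for every $j$ and both fans shows that Condition~$\bigstar$ is equivalent to \eqref{eq:D}, which yields both implications: a $\sigma\in\PToSS(\ADn)$ satisfies Condition~$\bigstar$, hence \eqref{eq:D}, and conversely \eqref{eq:D} gives Condition~$\bigstar$, hence by \Cref{pp:AR} a point of $\PToSS(\ADn)$.

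Finally, for the existence clause I would exhibit the uniform datum $\mu_i^j=1/w_i$ together with any $z$ satisfying $\on{Im}(z)>0$: then $|\mu_1^1-\mu_2^1|=|\mu_1^1-\mu_2^2|=0\le 1/(n-2)=\mu_3^j$, so \eqref{eq:D} holds, $\on{Im}(z)>0$ makes the datum non-degenerate (by the remark after \Cref{def:datum}), and the associated stability condition is non-concentrated, which is what the subsequent contraction argument requires. In sum, the only genuinely new work beyond \Cref{thm:A} and \Cref{pp:AR} is the central-charge bookkeeping in the two trivalent fans — identifying the higher-rank bundles there, computing their classes, and matching the resulting phase conditions with the two inequalities of \eqref{eq:D}.
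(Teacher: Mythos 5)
Your proposal is correct and follows essentially the same route as the paper: reduce everything to Condition~$\bigstar$ via \Cref{pp:AR}, observe that only the AR-arrows near the two forks of $\ZZ\ADn$ impose genuine constraints (the chain arrows only give $\mu_3^j\ge0$), compute the central charges of the intermediate bundles $X_\bullet$, translate the phase comparisons into $|\mu_1^a-\mu_2^b|\le\mu_3^j$, and propagate by the $\tau$- and $\ZZ_2$-symmetries; the existence clause via the uniform datum also matches. The only caveat is that you state the decisive fork computation as an ``expected outcome'' rather than carrying it out, but the inequalities you predict are exactly those the paper obtains explicitly (via dimension vectors along the section \eqref{eq:AR-D} rather than mesh relations from the tilting configuration), so the plan is sound as written.
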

\begin{proof}
Similar to type $\Apq$ case, we only need to check Condition~$\bigstar$ in \Cref{pp:AR}.
Consider the following section $\SEC$ of the AR-quiver:
\begin{equation}\label{eq:AR-D}
\begin{tikzcd}[column sep=20,row sep=10]
    P_0 \ar[dr] &&&& P_1^1\\
     & X_1 \ar[r] & \cdots \ar[r] & X_{n-3} \ar[dr]\ar[ur] \\
    X_0 \ar[ur] &&&& P_2^1.
\end{tikzcd}
\end{equation}
Using the dimension vectors (with respect to the quiver $\EQ=\SEC^{\on{op}}$),
one can calculate the central charges of objects in \eqref{eq:AR-D} explicitly (regarded as projectives of $\CC\EQ$):
\begin{equation}
\begin{cases}
  Z([P_0])=z, \\ Z([X_0])=z+1-\mu_1^1-\mu_2^1 \\
  \frac{1}{2}Z([X_i])=z+\frac{1}{2}(1-\mu_1^1-\mu_2^1-\sum_{j=1}^i \mu_3^j), & \mbox{$1\le i\le n-3$}  \\
  Z([P_1^1])=z-\mu_1^1,\\ Z([P_2^1])=z-\mu_2^1,
\end{cases}
\end{equation}
in terms of $z=Z([P_0])$ and the central charges $-\mu_?^?$ of some regular modules of $\EQ$.
Then we have
\begin{gather*}\begin{array}{rcl}
  \phi_\sigma(P_0)\le \phi_\sigma(X_1) &\Longleftrightarrow& \mu_2^2-\mu_1^1\le \mu_3^1 \\
  \phi_\sigma(X_0)\le \phi_\sigma(X_1) &\Longleftrightarrow& \mu_1^1-\mu_2^2\le \mu_3^1\\
  \phi_\sigma(X_{i-1})\le \phi_\sigma(X_i) &\Longleftrightarrow& \mu_3^i\ge 0,\qquad 2\le i\le n-3 \\
  \phi_\sigma(X_{n-3}) \le \phi_\sigma (P_1^1) &\Longleftrightarrow& \mu_2^1-\mu_1^1\le \mu_3^{n-2} \\
  \phi_\sigma(X_{n-3}) \le \phi_\sigma (P_2^1) &\Longleftrightarrow& \mu_1^1-\mu_2^1\le \mu_3^{n-2}
\end{array}\end{gather*}
Here we use the fact that $\on{Im}z\ge0$ and then the order of phases becomes
the reverse order of the real parts (provided that the imaginary parts coincide).

By applying $\tau$ (that shifts the superscripts $j$ of $\mu_i^j$), 
we can immediately write down all the inequalities for arrows in the $\tau$-orbits of the arrows above.
These arrows are precisely all arrows going (right-)upward in \Cref{fig:AR-D}.
Then applying the $\ZZ_2$-symmetry of $\widetilde{D}_n$ that interchanges the two rank two tubes
(i.e. $\mu_1^j \leftrightarrow \mu_2^j$),
we obtain all the inequalities for arrows going (right-)downward in \Cref{fig:AR-D}.
Together, we precisely get \eqref{eq:D},
which are precisely the condition of totally semi-stability.

Finally, \eqref{eq:D} holds if $\mu_1^1=\mu_1^2=\mu_2^1=\mu_2^2=1/2$.
Thus, there exists a non-degenerate totally semi-stable datum of type $\ADn$.
\end{proof}

\begin{corollary}
Given any non-concentrated stability condition $\sigma_0$ in $\PToSS(\ADn)$,
there is contractible flow on $\PToSS(\ADn)$ that contracts to $\sigma_0$. 
In particular, $\ToSS(\ADn)$ is contractible.
\end{corollary}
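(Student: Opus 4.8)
The strategy mirrors the affine type $A$ case treated in \Cref{cor:con-A}: I would exhibit an explicit linear contracting flow on the space of totally semi-stable data of type $\ADn$. First I would set up notation: let $\TSD_0=(\lambda_?^?,z_0)$ be the totally semi-stable datum determined by the given non-concentrated $\sigma_0$. By \Cref{pp:AR} (the characterization of concentrated versus non-concentrated via $\on{Im}z$, established in \Cref{pp:heart}), non-concentratedness of $\sigma_0$ forces $\on{Im}(z_0)>0$; in particular $\TSD_0$ is non-degenerate. Given an arbitrary $\sigma\in\PToSS(\ADn)$ with datum $\TSD=(\mu_?^?,z)$ (so $\on{Im}(z)\ge0$), define for $t\in[0,1]$ the linear interpolation
\begin{equation*}
\begin{cases}
  \mu_i^j(t)=t\,\mu_i^j+(1-t)\,\lambda_i^j, & 1\le i\le 3,\ 1\le j\le w_i,\\
  z(t)=t\,z+(1-t)\,z_0,
\end{cases}
\end{equation*}
and set $\TSD(t)=(\mu_?^?(t),z(t))$.

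**Key steps.** The verification then breaks into three routine points. (1) Each $\TSD(t)$ is a totally semi-stable datum: the partition constraints \eqref{eq:part-D} are affine-linear equalities, hence preserved under convex combination, and positivity $\mu_i^j(t)>0$ holds because both endpoints are positive and the simplex is convex. (2) Each $\TSD(t)$ is of type $\ADn$: the defining conditions \eqref{eq:D} are of the form $|\,\ell(\mu)\,|\le \mu_3^j$ where $\ell$ is linear in the $\mu_i^j$; since $\TSD$ and $\TSD_0$ both satisfy them, convexity of the region cut out by finitely many inequalities $|\ell|\le m$ (each such region is an intersection of half-spaces $\ell\le m$, $-\ell\le m$, hence convex) gives the claim for all $t$. (3) Each $\TSD(t)$ is non-degenerate: $\on{Im}z(t)=t\on{Im}(z)+(1-t)\on{Im}(z_0)>0$ for all $t\in[0,1)$ since $\on{Im}(z_0)>0$ and $\on{Im}(z)\ge0$, and the note after \Cref{def:datum} records that $\on{Im}(z)>0$ implies non-degeneracy; at $t=1$ we have $\TSD(1)=\TSD$, which is non-degenerate by hypothesis. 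Now \Cref{thm:D} converts each $\TSD(t)$ into a stability condition $\sigma(t)\in\PToSS(\ADn)$, with $\sigma(0)=\sigma_0$ and $\sigma(1)=\sigma$, and continuity in $t$ (and jointly in $(\sigma,t)$) follows since the central charge coordinates depend linearly — hence continuously — on $(\TSD,t)$ and the central charge is a local coordinate on $\Stab$. This is the desired contractible flow, and contractibility of $\ToSS(\ADn)$ follows by lifting (or simply noting $\ToSS(\ADn)\cong\PToSS(\ADn)\times\CC$).

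**Main obstacle.** There is essentially no deep obstacle here — the content was already front-loaded into \Cref{thm:D}, which identified $\PToSS(\ADn)$ with the set of non-degenerate type-$\ADn$ data, a convex-by-linear-inequalities region in the affine space of data (crossed with the half-space $\on{Im}z\ge0$). The only point requiring a modicum of care is the behavior at the boundary $t=1$, i.e. the possibility that the target $\sigma$ itself is concentrated ($\on{Im}z=0$); the argument above handles this by falling back on $\TSD(1)=\TSD$ being non-degenerate by assumption rather than by the $\on{Im}>0$ criterion. One should also confirm that the flow is continuous at $t=1$ in the analytic topology on $\Stab$, which is immediate from Bridgeland's theorem since we are moving along a path whose central charges vary linearly and whose slicings vary continuously (the heart can only jump between $\cohW$ and the tilted hearts $\on{mod}\CC\EQ'$, and \Cref{pp:heart} shows this happens exactly at $\on{Im}z=0$, consistently with the stated topology).
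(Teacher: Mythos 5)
Your proposal is correct and follows essentially the same route as the paper: the paper's proof simply invokes the flow \eqref{eq:flow} from \Cref{cor:con-A} and observes that the linear inequalities \eqref{eq:D} are preserved under convex combination, exactly as in your steps (1)--(3). Your additional remarks on non-degeneracy at $t=1$ and continuity of the flow are points the paper leaves implicit, but they are handled correctly.
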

\begin{proof}
The proof follows the same way as \Cref{cor:con-A},
using the same flow \eqref{eq:flow}.
The only thing to notice is that the linear combination formulae in \eqref{eq:flow}
ensures that linear inequalities in \eqref{eq:D} hold for
$\TSD(t)$ provided that they hold for $\TSD_0$ and $\TSD$.
\end{proof}

\section{The Euclidean case: affine type E}

For type $\AEn (n=6,7,8)$, we have $l=3$ and $(w_1,w_2,w_3)=(2,3,n-3)$. Denote $r:=n-3$. The partitions \eqref{eq:part} become
\begin{equation}\label{eq:part-E}
\begin{cases}
  1=\mu_1^1+\mu_1^2,\\
  1=\mu_2^1+\mu_2^2+\mu_2^3,\\
  1=\mu_3^1+\cdots+\mu_3^{r}.
\end{cases}
\end{equation}
One can embed the canonical quiver \eqref{eq:pqr} into the AR-quiver as shown in
\Cref{fig:AR-E6} for $\wh{E_6}$ and \Cref{fig:AR-E78} for $\wh{E_7}$ (top) and $\wh{E_8}$ (bottom) respectively.
These pictures are known back to Ringel \cite{Rin}. We redraw them with a modern take.

Similarly to affine type $D$, one can also illustrate 
Condition~$\bigstar$ in \Cref{pp:AR} to some inequalities/conditions for affine type $E$. 
For simplicity, we only list the corresponding inequalities/conditions for total semi-stability data
and omit the detailed calculations.

\begin{definition}\label{def:E}
A totally semi-stable datum $\TSD=(\mu_?^?,z)$ is of
\begin{itemize}
  \item type $\wh{E_6}$ if for any $i\in\ZZ_2, j,k\in\ZZ_3$
\begin{equation}\label{eq:E6}
\begin{cases}
\mu_1^i \le \mu_2^j+\mu_3^k,\\
    | (\mu_2^{j+1}-\mu_2^{j}) + (\mu_3^{k+1}-\mu_3^{k}) | \le \mu_1^i;
\end{cases}
\end{equation}
  \item type $\wh{E_7}$ if for any $i\in\ZZ_2,j\in\ZZ_3,k\in\ZZ_4$
\begin{equation}\label{eq:E7}
\begin{cases}
 \mu_1^i\le \mu_2^j+\mu_3^k,\\
  \mu_2^j\le \mu_3^{k-1}+\mu_3^{k+1},\\
   \mu_1^i+(\mu_2^j-\mu_2^{j\pm1})\le 2\mu_3^k+\mu_3^{k\pm1},\\
  \mu_1^i-\mu_1^{i+1}+(\mu_2^j-\mu_2^{j\pm1})\le 2\mu_3^k+\mu_3^{k\pm1}-\mu_3^{k\mp1};
\end{cases}
\end{equation}
  \item type $\wh{E_8}$ if for any $i\in\ZZ_2, j\in\ZZ_3, k\in\ZZ_5$
\begin{equation}\label{eq:E8}
\begin{cases}
\mu_2^j\le \mu_3^{k-1}+\mu_3^{k+1},\\
\mu_3^{k-1}+\mu_3^{k+1}\le \mu_1^i\le \mu_2^j+\mu_3^k,\\
\mu_1^i +(\mu_2^j-\mu_2^{j\pm1})\le 2\mu_3^k+\mu_3^{k\pm1},\\
\mu_2^j-\mu_2^{j\pm1}\le \mu_3^{k+1}+\mu_3^{k-1}-\mu_3^{k\pm2},\\
2\mu_1^i+(\mu_2^j-\mu_2^{j\pm2})\le
    \mu_3^{k\mp1}+2\mu_3^{k}+3\mu_3^{k\pm1},\\
\mu_3^{k\mp1}+2\mu_3^k+3\mu_3^{k\pm1}-\mu_3^{k\pm2}\le 3\mu_1^i+2(\mu_2^j-\mu_2^{j\mp1}),\\
(\mu_1^i-\mu_1^{i\pm1}) + (\mu_2^j-2\mu_2^{j\pm1})\le 
    2\mu_3^{k\mp2}+\mu_3^{k\mp1}-\mu_3^{k\pm1}-2\mu_3^{k\pm2}.
\end{cases}
\end{equation}
\end{itemize}
\end{definition}

\begin{theorem}\label{thm:E}
To give a stability condition in $\PToSS(\AEn)$ is equivalent
to give a non-degenerate totally semi-stable datum of type $\AEn$.
Moreover, there exists a non-degenerate totally semi-stable datum of type $\AEn$.
\end{theorem}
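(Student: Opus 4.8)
The plan is to follow verbatim the strategy already used for affine types $A$ and $D$. By \Cref{pp:AR}, the claim that a stability condition in $\PToSS(\AEn)$ is the same thing as a non-degenerate totally semi-stable datum of type $\AEn$ reduces to verifying that Condition~$\bigstar$ for the datum $\TSD=(\mu_?^?,z)$ is equivalent to the system of inequalities \eqref{eq:E6} (resp. \eqref{eq:E7}, \eqref{eq:E8}). First I would fix, for each of $n=6,7,8$, a section $\SEC$ of the preprojective component $\on{AR}\on{vect}(\wpl)\cong\ZZ\EQ$ — the one drawn in \Cref{fig:AR-E6} and \Cref{fig:AR-E78} — and read off, from the exact sequences in $\cohW$, the dimension vectors of its vertices in terms of the basis $\{[P_0],\delta\}\sqcup\{[S_i^j]\}$. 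Applying the central charge then expresses $Z$ of every vertex of $\SEC$ as $z$ plus an integral combination of the $-\mu_i^j$, exactly as in the type $D$ computation: the imaginary part of all these is a fixed multiple of $\on{Im}(z)$, so (since $\on{Im}(z)\ge0$) the phase inequality $\phi_\sigma(X)\le\phi_\sigma(Y)$ along each arrow $X\to Y$ of $\SEC$ becomes a \emph{linear} inequality among the $\mu_i^j$ — one reads it off as "the real part of $Z$ weakly decreases along the arrow, after dividing by the common imaginary-part coefficient."

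The next step is the bookkeeping: it suffices to check $\bigstar$ on one arrow per $\tau$-orbit, since applying $\tau$ merely cyclically shifts the superscripts $j\mapsto j+1$ of the $\mu_i^j$ (as recorded after \eqref{eq:AR-D}), and one may further exploit the symmetries of the affine $E_n$ diagram — e.g. the $\ZZ_2$ acting on the weight-$2$ tube and the (for $E_6$) $\ZZ_3$ on the weight-$3$ tube, which permute the $\mu$'s accordingly. Running over the finitely many arrow-orbits in $\SEC$ and translating each into its inequality produces precisely the lists in \eqref{eq:E6}/\eqref{eq:E7}/\eqref{eq:E8}; conversely those inequalities, together with their $\tau$- and symmetry-translates, cover every arrow of $\on{AR}\on{vect}(\wpl)$, and for objects in tubes $\bigstar$ is automatic because every tube object has phase one (\Cref{pp:part}), which is maximal. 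This gives the first assertion by \Cref{pp:AR}.

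For the existence statement, I would simply exhibit a barycentric datum: take $z$ with $\on{Im}(z)>0$ (automatically non-degenerate by the remark after \Cref{def:datum}) and take each partition \eqref{eq:part-E} to be the equal one, $\mu_i^j=1/w_i$. One then checks that the finitely many inequalities \eqref{eq:E6} (resp. \eqref{eq:E7}, \eqref{eq:E8}) hold in this case — each reduces to a trivial numerical inequality such as $1/2\le 1/3+1/w_3$ or $0\le\text{(sum of nonneg.\ terms)}$ — just as $\mu_i^j=1/2$ works in the type $D$ proof. Hence a non-degenerate totally semi-stable datum of type $\AEn$ exists.

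The main obstacle is purely computational stamina rather than conceptual: type $E_8$ has a weight vector $(2,3,5)$ and a long preprojective section, so the number of arrow-orbits — and hence of resulting inequalities — is sizeable, and one must be careful with signs and with the cyclic index conventions ($i\in\ZZ_2$, $j\in\ZZ_3$, $k\in\ZZ_{r}$) when translating arrows under $\tau$ and under the diagram symmetries; this is exactly why the paper chooses to "omit the detailed calculations" and only record the final inequality lists. Verifying that the barycentric datum satisfies all of \eqref{eq:E8} is the one place where a short but genuine case-check is unavoidable, though each individual check is elementary.
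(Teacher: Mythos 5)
Your proposal is correct and follows essentially the same route as the paper: reduce to Condition~$\bigstar$ via \Cref{pp:AR}, choose the section $\SEC$ with $\EQ=\SEC^{\on{op}}$, express the central charges of the section vertices as $z_0$ plus integral combinations of the $-\mu_i^j$ (normalizing so the imaginary parts agree, so that phase comparison becomes real-part comparison), check one arrow per $\tau$-orbit and propagate by the $\tau$- and diagram symmetries to recover \eqref{eq:E6}/\eqref{eq:E7}/\eqref{eq:E8}, and exhibit the barycentric datum $\mu_i^j=1/w_i$ for existence. The remaining work is exactly the finite linear-algebra bookkeeping the paper carries out case by case.
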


For the existence, one can take the obvious one: $\mu_i^j=1/w_i$ for any $1\le i\le 3, j\in\ZZ_{w_i}$.

\begin{corollary}
Given any non-concentrated stability condition $\sigma_0$ in $\PToSS(\AEn)$,
there is contractible flow on $\PToSS(\AEn)$ that contracts to $\sigma_0$. In particular, $\ToSS(\AEn)$ is contractible.
\end{corollary}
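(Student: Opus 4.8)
The plan is to imitate verbatim the argument used for \Cref{cor:con-A} in affine type A (and noted for affine type D), since the only structural ingredients needed are: (i) that non-concentrated stability conditions in $\PToSS(\AEn)$ correspond, via \Cref{pp:AR} and \Cref{pp:heart}, to non-degenerate totally semi-stable data of type $\AEn$ with $\on{Im}(z)>0$; (ii) that such data exist (the barycentric datum $\mu_i^j=1/w_i$, guaranteed by \Cref{thm:E}); and (iii) that the defining conditions \eqref{eq:E6}, \eqref{eq:E7}, \eqref{eq:E8} together with the partition equalities \eqref{eq:part-E} are all \emph{linear} in the variables $\mu_?^?$ and $z$. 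Granting these, I would fix a non-concentrated $\sigma_0\in\PToSS(\AEn)$ with associated datum $\TSD_0=(\lambda_?^?,z_0)$, note $\on{Im}(z_0)>0$, and for an arbitrary $\sigma\in\PToSS(\AEn)$ with datum $\TSD=(\mu_?^?,z)$ (so $\on{Im}(z)\ge0$) define the straight-line homotopy exactly as in \eqref{eq:flow}: $\mu_i^j(t)=t\mu_i^j+(1-t)\lambda_i^j$ and $z(t)=tz+(1-t)z_0$ for $t\in[0,1]$.

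The key steps, in order, are then: first, check that $\TSD(t)=(\mu_?^?(t),z(t))$ is a totally semi-stable datum, i.e. that the partitions \eqref{eq:part-E} persist — this is immediate since each equality $1=\sum_j \mu_i^j$ is affine and convex combinations of solutions are solutions, and positivity $\mu_i^j(t)>0$ follows from convexity of the positive orthant. Second, check non-degeneracy of $\TSD(t)$ for $t\in[0,1)$: since $\on{Im}(z(t))=t\,\on{Im}(z)+(1-t)\,\on{Im}(z_0)>0$ for $t<1$, the remark after \Cref{def:datum} (that $\on{Im}(z)>0\Rightarrow$ non-degenerate) applies, and at $t=1$ we have $\TSD(1)=\TSD$ which is non-degenerate by hypothesis. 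Third, verify that $\TSD(t)$ is still \emph{of type} $\AEn$, i.e. satisfies \eqref{eq:E6}/\eqref{eq:E7}/\eqref{eq:E8}: because every such condition has the shape (linear expression in $\mu_?^?$) $\le$ (linear expression in $\mu_?^?$), possibly with an absolute value on one side, and both $\TSD_0$ and $\TSD$ satisfy them, the convex combination does too — linear inequalities and inequalities of the form $|L|\le M$ (with $L$, $M$ affine) define convex sets, which are closed under the segment joining two of their points. Fourth, apply \Cref{thm:E} to conclude each $\TSD(t)$ defines a genuine $\sigma(t)\in\PToSS(\AEn)$, with $\sigma(0)=\sigma_0$ and $\sigma(1)=\sigma$; the map $(\sigma,t)\mapsto\sigma(t)$ is continuous in the central-charge coordinates, giving the asserted linear contracting flow. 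Contractibility of $\ToSS(\AEn)$ then follows by combining this deformation retraction of $\PToSS(\AEn)=\ToSS(\AEn)/\CC$ onto a point with the contractibility of the $\CC$-fibre.

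The only point requiring any care — and hence the main (mild) obstacle — is the convexity bookkeeping for the absolute-value constraints appearing in \eqref{eq:E6}, \eqref{eq:E7} and \eqref{eq:E8}: one must observe that a condition of the form $|L(\mu)|\le M(\mu)$ with $L,M$ affine is equivalent to the pair of genuine linear inequalities $L(\mu)\le M(\mu)$ and $-L(\mu)\le M(\mu)$, so that the whole type-$\AEn$ locus is cut out by linear inequalities and linear equalities, hence is convex; then the segment from $\TSD_0$ to $\TSD$ lies inside it. (In type D this was phrased as ``the linear combination formulae in \eqref{eq:flow} ensure that the linear inequalities in \eqref{eq:D} hold for $\TSD(t)$.'') Everything else is a verbatim transcription of the type-A and type-D corollaries, so I would simply write: ``The proof follows the same way as \Cref{cor:con-A}, using the same flow \eqref{eq:flow}; one only notes that the defining conditions of type $\AEn$ data in \eqref{eq:E6}, \eqref{eq:E7} and \eqref{eq:E8} are (after rewriting each $|L|\le M$ as $\pm L\le M$) linear inequalities, hence preserved under the convex combinations in \eqref{eq:flow}, so each $\TSD(t)$ is again a non-degenerate totally semi-stable datum of type $\AEn$ for $t\in[0,1)$, and $\TSD(1)=\TSD$ is non-degenerate by assumption; \Cref{thm:E} then gives the contracting flow $t\mapsto\sigma(t)$ with $\sigma(0)=\sigma_0$, $\sigma(1)=\sigma$.''
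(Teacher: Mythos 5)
Your proposal is correct and is essentially the paper's own argument: the paper leaves this corollary without an explicit proof, intending the reader to repeat the type-A/type-D argument (the linear flow \eqref{eq:flow}, non-degeneracy from $\on{Im}z(t)>0$ for $t<1$, and preservation of the type-$\AEn$ inequalities under convex combination), which is exactly what you do. Your extra observation that each absolute-value constraint $|L|\le M$ in \eqref{eq:E6} unpacks into the pair of linear inequalities $\pm L\le M$, so the type-$\AEn$ locus is convex, is the right (and only) point of care.
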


The rest of the section is the case-by-case proof of the theorem. In the following subsections, we will rewrite \eqref{eq:part-E} for affine type $E_n (n=6,7,8)$ via using the notation:
\[a_i=\mu_1^i,\ b_j=\mu_2^j,\ c_k=\mu_3^k,\quad\forall i\in\ZZ_2,j\in\ZZ_3,k\in\ZZ_{n-3},\]
for simplicity.
\begin{figure}[hbt]  \centering
\makebox[\textwidth][c]{
 \includegraphics[width=16cm]{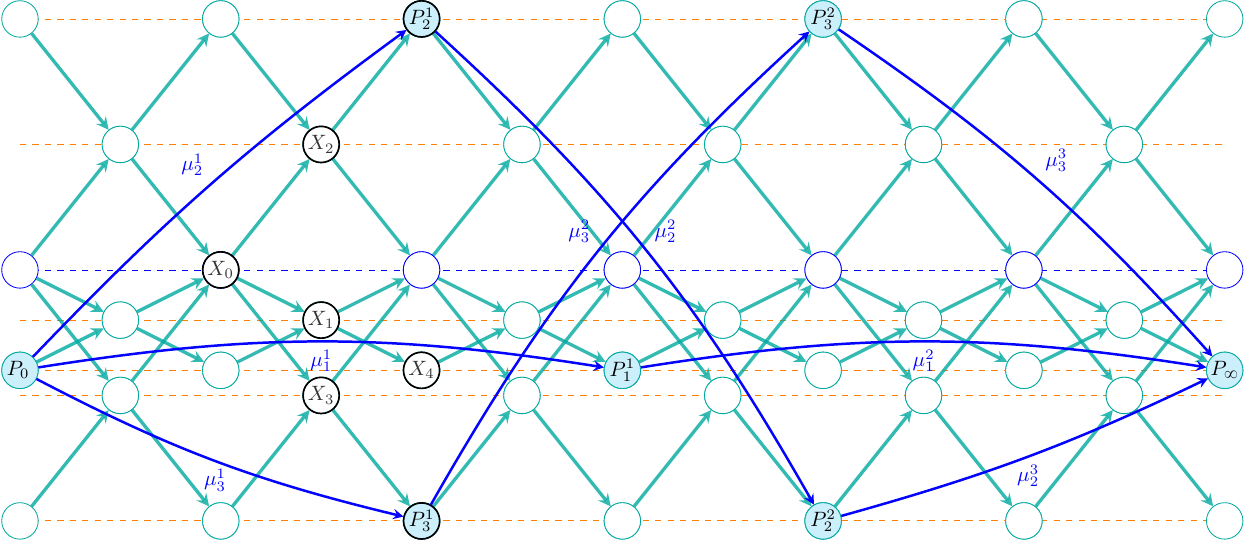}}
\caption{The canonical algebra quiver in the AR-quiver of $\D(\canA)$: type $\wh{E_6}$}\label{fig:AR-E6}
\end{figure}

\subsection{Proof for affine type $\widetilde{E_6}$}
The partitions \eqref{eq:part-E} are rewritten as
\begin{equation}\label{eq:part-E6}
\begin{cases}
  1=a_1+a_2,\\
  1=b_1+b_2+b_3,\\
  1=c_1+c_2+c_3.
\end{cases}
\end{equation}
Take a section $\SEC$ of the AR-quiver (cf. \Cref{fig:AR-E6}):
\begin{equation}\label{eq:AR-E6}
\begin{tikzcd}[column sep=20,row sep=10]
    &X_2 \ar[r] & X_5 &&(X_5=P_2^1)\\
    X_0 \ar[r]\ar[dr]\ar[ur] & X_1 \ar[r] & X_4 \\
    &X_3 \ar[r] & X_6 &&(X_6=P_3^1)
\end{tikzcd}
\end{equation}
and we have the equivalence \eqref{eq:tri-equiv2} for $\EQ=\SEC^{ \on{op} }$.
Since the dimension vectors in each row of
\begin{equation}
\begin{pmatrix}
    \udim(X_0)\\
    \udim(S_1^1)\\
    \udim(S_1^2)\\
    \udim(S_2^1)\\
    \udim(S_2^3)\\
    \udim(S_3^1)\\
    \udim(S_3^3)
\end{pmatrix} =
 \begin{pmatrix}
   1&0&0&0&0&0&0 \\
   1&1&1&1&0&0&0 \\
   2&1&1&1&1&1&1 \\
   1&1&1&0&0&1&0 \\
   1&0&1&1&0&0&1 \\
   1&1&0&1&0&0&1 \\
   1&0&1&1&0&1&0 \\
 \end{pmatrix},
\end{equation}
with respect to $\EQ$, are linearly independent, we know that
(the classes of) these objects form a basis of $K(\D)$
with central charges $z_0\colon=Z([X_0])$ and $\mu_i^j=-Z([S_i^j])$.
Further, one can calculate the following dimension vectors
\[
\begin{pmatrix}
    \udim( X_0)\\ \udim( \tau^{-1} X_0)\\
    \udim( X_1)\\ \udim( X_2)\\ \udim( X_3)\\
    \udim( X_4)\\ \udim( X_5)\\ \udim( X_6)\\
    \udim( \tau X_4)\\ \udim( \tau X_5)\\ \udim( \tau X_6)
\end{pmatrix} =
 \begin{pmatrix}
   1&0&0&0&0&0&0 \\
   2&1&1&1&0&0&0 \\
   1&1&0&0&0&0&0 \\
   1&0&1&0&0&0&0 \\
   1&0&0&1&0&0&0 \\
   1&1&0&0&1&0&0 \\
   1&0&1&0&0&1&0 \\
   1&0&0&1&0&0&1 \\
   0&0&0&0&-1&0&0 \\
   0&0&0&0&0&-1&0 \\
   0&0&0&0&0&0&-1 \\
 \end{pmatrix},
\]
and thus obtain
\[\begin{pmatrix}
    [X_0]\\ [\tau^{-1} X_0]\\
    3[X_1]\\3[X_2]\\3[X_3]\\
    3[X_4]\\3[X_5]\\3[X_6]\\
    3[\tau X_4]\\3[\tau X_5]\\3[\tau X_6]
\end{pmatrix}=
 \begin{pmatrix}
   1  & 0  & 0  & 0  & 0  & 0  & 0  \\
   1  & 1  & 0  & 0  & 0  & 0  & 0  \\
   2  & 1   & 0   & 1   & -1  & 1   & -1  \\
   2  & 1   & 0   & 1   & 2   & -2  & -1  \\
   2  & 1   & 0   & -2  & -1  & 1   & 2 \\
   1  & 2   & 3   & -1  & -2  & -1  & -2 \\
   1  & -1  & 0   & 2   & 1   & -1  & 1  \\
   1  & -1  & 0   & -1  & 1   & 2   & 1  \\
   1  & -1  & -3  & 2   & 1   & 2   & 1  \\
   1  & 2   & 0   & -1  & 1   & -1  & -2  \\
   1  & 2   & 0   & -1  & -2  & -1  & 1
 \end{pmatrix}\cdot
\begin{pmatrix}
  [X_0] \\ [S_1^1] \\ [S_1^2] \\ [S_2^1] \\ [S_2^3] \\ [S_3^1] \\ [S_3^3]
\end{pmatrix}.
\]
Considering the central charges and using relation \eqref{eq:part-E6}, we have the following,:
\begin{equation}\label{eq:E6-Z}
\begin{cases}
  Z([X_0])&=z_0, \\
  Z([\tau^{-1} X_0])&=z_0-a_1, \\

  3Z([X_1])&=2z_0-a_1-(b_1-b_3)-(c_1-c_3),     \\
  3Z([X_2])&=2z_0-a_1-(b_3-b_2)-(c_2-c_1),     \\
  3Z([X_3])&=2z_0-a_1-(b_2-b_1)-(c_3-c_2),     \\

  3Z([X_4])&=z_0-a_2-(b_2-b_3)-(c_2-c_3),     \\
  3Z([X_5])&=z_0-a_2-(b_1-b_2)-(c_3-c_1),     \\
  3Z([X_6])&=z_0-a_2-(b_3-b_1)-(c_1-c_2),     \\

  3Z([\tau X_4])&=z_0-(a_1-a_2)-(b_1-b_2)-(c_1-c_2),     \\
  3Z([\tau X_5])&=z_0-(a_1-a_2)-(b_3-b_1)-(c_2-c_3),     \\
  3Z([\tau X_6])&=z_0-(a_1-a_2)-(b_2-b_3)-(c_3-c_1).
\end{cases}
\end{equation}
For type $\widetilde{E_6}$, the semi-stability is equivalent to:
\begin{equation}\label{eq:E6-ssi}
\begin{cases}
    \phi_\sigma( \tau^t X_0)\le \phi_\sigma( \tau^t X_i)\le \phi_\sigma(\tau^{t-1} X_0),\\
    \phi_\sigma(\tau^{t+1} X_{i+3})\le \phi_\sigma(\tau^t X_i)\le \phi_\sigma( \tau^t X_{i+3}),
\end{cases}
\forall t\in\ZZ, i=1,2,3.
\end{equation}
For $t=0$, we have (using \eqref{eq:E6-Z} and eliminating $z_0$)
\begin{equation*}
\begin{cases}
  a_1\le b_2+c_2,  \\
  a_1\le b_1+c_3,  \\
  a_1\le b_3+c_1,  \\
  0\le a_1+(b_1-b_3)+(c_1-c_3)\le 2a_1,     \\
  0\le a_1+(b_3-b_2)+(c_2-c_1)\le 2a_1,     \\
  0\le a_1+(b_2-b_1)+(c_3-c_2)\le 2a_1.  
\end{cases}
\end{equation*}
By the $\tau$-symmetry that rotates $\{\mu_i^j\mid j\in\ZZ_{w_i}\}$,
one can obtain the following set of inequalities
\begin{equation*}
\begin{cases}
  a_i\le b_j+c_k,   \\
    |(b_{j+1}-b_j) + (c_{k+1}-c_k) | \le a_i,   
\end{cases}   \forall i\in\ZZ_2, j,k\in\ZZ_3,
\end{equation*}
which is equivalent to \eqref{eq:E6-ssi} and is precisely \eqref{eq:E6} for type $\widetilde{E_6}$.


\subsection{Proof for affine type $\widetilde{E_7}$}
The partitions \eqref{eq:part-E} are rewritten as
\begin{equation*}
\begin{cases}
  1=a_1+a_2,\\
  1=b_1+b_2+b_3,\\
  1=c_1+c_2+c_3+c_4.
\end{cases}
\end{equation*}
Take a section $\SEC$ of the AR-quiver (cf. the upper picture of \Cref{fig:AR-E78}):
\begin{equation}\label{eq:AR-E7}
\begin{tikzcd}[column sep=20,row sep=10]
    &X_3 \ar[r] & X_5 \ar[r] & X_7 &&(X_7=P_2^1)\\
    X_0 \ar[r]\ar[dr]\ar[ur] & X_1  \\
    &X_2 \ar[r] & X_4 \ar[r] & X_6 &&(X_6=\tau^{-1} P_3^1)
\end{tikzcd}
\end{equation}
and we have the equivalence~\eqref{eq:tri-equiv2} for $\EQ=\SEC^{ \on{op} }$.
Since the dimension vectors in each row of
\begin{equation}
\begin{pmatrix}
    \udim(X_0)\\
    \udim(S_1^1)\\
    \udim(S_1^2)\\
    \udim(S_2^1)\\
    \udim(S_2^2)\\
    \udim(S_3^1)\\
    \udim(S_3^2)\\
    \udim(S_3^3)
\end{pmatrix} =
 \begin{pmatrix}
   1&0&0&0&0&0&0&0 \\
   2&1&2&1&1&1&1&0 \\
   2&1&1&2&1&1&0&1 \\

   2&1&1&1&1&1&1&1 \\
   1&1&1&1&0&0&0&0 \\

   1&1&1&0&1&0&0&0  \\
   1&0&1&1&1&0&1&0  \\
   1&1&0&1&0&1&0&0
 \end{pmatrix},
\end{equation}
with respect to $\EQ$, are linearly independent, we know that
(the classes of) these objects form a basis of $K(\D)$
with central charges $z_0\colon=Z([X_0])$ and $-\mu_i^j=Z([S_i^j])$.
Further, one can calculate the following dimension vectors
\[
\begin{pmatrix}
    \udim( X_0)\\ \udim( \tau^{-1} X_0)\\ \udim( X_1)\\
    \udim( X_2)\\ \udim( X_3)\\
    \udim( X_4)\\ \udim( X_5)\\ \udim( X_6)\\ \udim( X_7)\\
    \udim( \tau X_4)\\ \udim( \tau X_5)\\ \udim( \tau X_6)\\ \udim( \tau X_7)
\end{pmatrix} =
 \begin{pmatrix}
   1&0&0&0&0&0&0&0 \\
   2&1&1&1&0&0&0&0 \\
   1&1&0&0&0&0&0&0 \\
   1&0&1&0&0&0&0&0 \\
   1&0&0&1&0&0&0&0 \\
   1&0&1&0&1&0&0&0 \\
   1&0&0&1&0&1&0&0 \\
   1&0&1&0&1&0&1&0 \\
   1&0&0&1&0&1&0&1 \\
   0&0&0&0&-1&0&-1&0 \\
   0&0&0&0&0&-1&0&-1 \\
   0&0&0&0&0&0&-1&0 \\
   0&0&0&0&0&0&0&-1 \\
 \end{pmatrix},
\]
and thus obtain
\begin{equation}\label{eq:matrix7}
\begin{pmatrix}
    [X_0]\\ [\tau^{-1} X_0]\\
    2[X_1]\\ 4[X_2]\\ 4[X_3]\\
    2[X_4]\\ 2[X_5]\\ 4[X_6]\\ 4[X_7]\\
    2[\tau X_4]\\ 2[\tau X_5]\\ 4[\tau X_6]\\ 4[\tau X_7]
\end{pmatrix}=
 \begin{pmatrix}
    1&0&0&0&0&0&0&0  \\
    1&0&0&0&1&0&0&0  \\

    1&-1&-1&1&1&1&0&1  \\
    3&3&1&-1&1&-1&-2&-3  \\
    3&-1&1&-1&1&-1&2&1  \\

    1&1&1&-1&-1&1&0&-1  \\
    1&1&1&-1&-1&-1&0&1  \\
    1&1&-1&1&-1&1&2&-1  \\
    1&1&3&1&-1&-3&-2&-1  \\

    1&1&1&-1&1&-1&-2&-1  \\
    1&-1&-1&-1&1&1&2&1  \\
    1&1&3&-3&-1&1&-2&-1  \\
    1&1&-1&-3&-1&1&2&3  \\
 \end{pmatrix}\cdot
\begin{pmatrix}
  [X_0] \\ [S_1^1] \\ [S_1^2]
  \\ [S_2^1] \\ [S_2^2]
  \\ [S_3^1] \\ [S_3^2] \\ [S_3^3]
\end{pmatrix}.
\end{equation}
For type $\widetilde{E_7}$, the semi-stability is equivalent to:
\begin{equation}\label{eq:E7-t}
\begin{cases}
    \phi_\sigma( \tau^t X_0)\le \phi_\sigma(\tau^t X_i)\le \phi_\sigma(\tau^{t-1} X_0),\quad i=1,2,3,\\
    \phi_\sigma(\tau^{t+1} X_{i+2})\le \phi_\sigma(\tau^t X_{i} )\le \phi_\sigma(\tau^t X_{i+2})\quad i=2,3,4,5,
\end{cases} \forall t\in\ZZ.
\end{equation}
Similarly as before, one can express the central charges of objects in \eqref{eq:matrix7}
in terms of $z_0$ and $\mu_i^j$ and obtain (by eliminating $z_0$) a set of inequalities for $t=0$ in \eqref{eq:E7-t}. Applying the $\tau$-symmetry, one deduces that the semi-stability is equivalent to
\begin{equation*}
\begin{cases}
  a_i\le b_j+c_k,\\
  b_j\le c_{k-1}+c_{k+1},\\ 
  a_i+(b_j-b_{j\pm1})\le 2c_k+c_{k\pm1},\\
  a_{i}-a_{i+1}+(b_j-b_{j\pm1})\le 2c_k+c_{k\pm1}-c_{k\mp1},
\end{cases}\forall i\in\ZZ_2, j\in\ZZ_3, k\in\ZZ_4,
\end{equation*}
which is precisely \eqref{eq:E7} for type $\widetilde{E_7}$.

\subsection{Proof/Calculation for affine type $\widetilde{E_8}$}
The partitions \eqref{eq:part-E} are rewritten as
\begin{equation*}
\begin{cases}
  1=a_1+a_2,\\
  1=b_1+b_2+b_3,\\
  1=c_1+c_2+c_3+c_4+c_5.
\end{cases}
\end{equation*}
Take a section $\SEC$ of the AR-quiver (cf. the lower picture of \Cref{fig:AR-E78}):
\begin{equation}\label{eq:AR-E8}
\begin{tikzcd}[column sep=20,row sep=10]
    &X_3 \ar[r] & X_5 \ar[r] & X_6 \ar[r] & X_7\ar[r] & X_8 && (X_8=\tau P_2^1)\\
    X_0 \ar[r]\ar[dr]\ar[ur] & X_1  \\
    &X_2 \ar[r] & X_4
\end{tikzcd}
\end{equation}
and we have the equivalence \eqref{eq:tri-equiv2} for $\EQ=\SEC^{ \on{op} }$.
Since the dimension vectors in each row of
\begin{equation}
\begin{pmatrix}
    \udim(X_0)\\
    \udim(S_1^1)\\
    \udim(S_1^2)\\
    \udim(S_2^1)\\
    \udim(S_2^2)\\
    \udim(S_3^1)\\
    \udim(S_3^2)\\
    \udim(S_3^3)\\
    \udim(S_3^4)
\end{pmatrix} =
 \begin{pmatrix}
   1&0&0&0&0&0&0&0&0 \\
   3&1&2&3&1&2&2&1&1\\
   3&2&2&2&1&2&1&1&0 \\

   2&1&2&1&1&1&1&0&0 \\
   2&1&1&2&1&1&1&1&0 \\

   1&0&1&1&1&1&0&0&0 \\
   1&1&0&1&0&1&1&0&0 \\
   1&0&1&1&0&1&1&1&0 \\
   2&1&1&1&1&1&1&1&1
 \end{pmatrix},
\end{equation}
with respect to $\EQ$, are linearly independent, we know that
(the classes of) these objects form a basis of $K(\D)$
with central charges $z_0\colon=Z([X_0])$ and $-\mu_i^j=Z([S_i^j])$.
Further, one can calculate the following dimension vectors
\[
\begin{pmatrix}
    \udim( X_0)\\ \udim( \tau^{-1} X_0)\\ \udim( X_1)\\ \udim( X_2)\\ \udim( X_3)\\
    \udim( X_4)\\ \udim( X_5)\\ \udim( X_6)\\ \udim( X_7)\\ \udim( X_8)\\
    \udim( \tau X_4)\\ \udim( \tau X_5)\\ \udim( \tau X_6)\\ \udim( \tau X_7)\\ \udim( \tau X_8)
\end{pmatrix} =
 \begin{pmatrix}
   1&0&0&0&0&0&0&0&0 \\
   2&1&1&1&0&0&0&0&0 \\
   1&1&0&0&0&0&0&0&0 \\
   1&0&1&0&0&0&0&0&0 \\
   1&0&0&1&0&0&0&0&0 \\
   1&0&1&0&1&0&0&0&0 \\
   1&0&0&1&0&1&0&0&0 \\
   1&0&0&1&0&1&1&0&0 \\
   1&0&0&1&0&1&1&1&0 \\
   1&0&0&1&0&1&1&1&1 \\
   0&0&0&0&-1&0&0&0&0 \\
   0&0&0&0&0&-1&-1&-1&-1 \\
   0&0&0&0&0&0&-1&-1&-1 \\
   0&0&0&0&0&0&0&-1&-1 \\
   0&0&0&0&0&0&0&0&-1 \\
 \end{pmatrix},
\]
and thus obtain
\begin{equation}\label{eq:matrix8}
\begin{pmatrix}
    [X_0]\\ [\tau^{-1} X_0]\\
    2[X_1]\\ 3[X_2]\\ 6[X_3]\\
    3[X_4]\\ 3[X_5]\\ 2[X_6]\\ 3[X_7]\\ 6[X_8]\\
    3[\tau X_4]\\ 3[\tau X_5]\\ 2[\tau X_6]\\ 3[\tau X_7]\\ 6[\tau X_8]
\end{pmatrix}=
 \begin{pmatrix}
1&0&0&0&0&0&0&0&0 \\
1&1&1&0&0&-1&-1&-1&-1 \\

1&0&1&0&0&-1&0&-1&0 \\
2&1&1&1&-1&-1&-2&0&-1 \\
5&4&1&-2&2&-1&-2&-3&-4 \\

1&-1&-1&2&1&1&-1&0&1 \\
2&1&1&-2&-1&2&1&0&-1 \\
1&0&-1&0&0&1&2&1&0 \\
1&-1&-1&-1&1&1&2&3&1 \\
1&2&-1&-4&-2&1&2&3&4 \\

1&2&2&-1&-2&-2&-1&0&-2 \\
2&1&1&1&2&-1&-2&-3&-4 \\
1&0&1&0&0&1&0&-1&-2 \\
1&-1&-1&2&1&1&2&0&-2 \\
1&-4&-1&2&4&1&2&3&-2
 \end{pmatrix}\cdot
\begin{pmatrix}
  [X_0] \\ [S_1^1] \\ [S_1^2]
  \\ [S_2^1] \\ [S_2^2]
  \\ [S_3^1] \\ [S_3^2] \\ [S_3^3] \\ [S_3^4]
\end{pmatrix}
\end{equation}
For type $\widetilde{E_8}$, the semi-stability is equivalent to:
\begin{equation}\label{eq:E8-t}
\begin{cases}
    \phi_\sigma( \tau^t X_0)\le \phi_\sigma(\tau^t X_i)\le \phi_\sigma(\tau^{t-1} X_0),\quad i=1,2,3,\\
     \phi_\sigma(\tau^{t+1} X_{i+2})\le \phi_\sigma(\tau^t X_{i} )\le \phi_\sigma(\tau^t X_{i+2}),\quad i=2,3,\\
    \phi_\sigma(\tau^{t+1} X_{i+1})\le \phi_\sigma(\tau^t X_{i} )\le \phi_\sigma(\tau^t X_{i+1}),\quad i=5,6,7,\\
\end{cases}
\forall t\in\ZZ.
\end{equation}
Similarly as before, one can express the central charges of objects in \eqref{eq:matrix8}
in terms of $z_0$ and $\mu_i^j$ and obtain (by eliminating $z_0$) a set of inequalities for $t=0$ in \eqref{eq:E8-t}. Applying the $\tau$-symmetry, one deduces that the semi-stability is equivalent to
\begin{equation*}
\begin{cases}
b_j\le c_{k-1}+c_{k+1},\\
c_{k-1}+c_{k+1}\le a_i \le b_j+c_k,\\
a_i +(b_j-b_{j\pm1})\le 2c_k+c_{k\pm1},\\
b_j-b_{j\pm1}\le c_{k+1}+c_{k-1}-c_{k\pm2},\\
2a_i+(b_j-b_{j\pm2})\le c_{k\mp1}+2c_{k}+3c_{k\pm1},\\
c_{k\mp1}+2c_k+3c_{k\pm1}-c_{k\pm2}\le 3a_i+2(b_j-b_{j\mp1}),\\
(a_i-a_{i\pm1}) + (b_j-2b_{j\pm1})\le 2c_{k\mp2}+c_{k\mp1}-c_{k\pm1}-2c_{k\pm2},
\end{cases}\forall i\in\ZZ_2, j\in\ZZ_3, k\in\ZZ_5,
\end{equation*}
which is precisely \eqref{eq:E8} for type $\widetilde{E_8}$.



\[\]

\address{Qy:
	Yau Mathematical Sciences Center and Department of Mathematical Sciences,
	Tsinghua University,
    100084 Beijing,
    China.
    \&
    Beijing Institute of Mathematical Sciences and Applications, Yanqi Lake, Beijing, China}
\email{yu.qiu@bath.edu}

\address{Zx:
    Beijing Advanced Innovation Center for Imaging Theory and Technology, Academy for Multidisciplinary Studies, Capital Normal University, Beijing 100048, China}
\email{xiaoting.zhang09@hotmail.com}

\begin{landscape}
\begin{figure}[hbt]  \centering \vskip .5cm
\makebox[\textwidth][c]{
 \includegraphics[width=25cm]{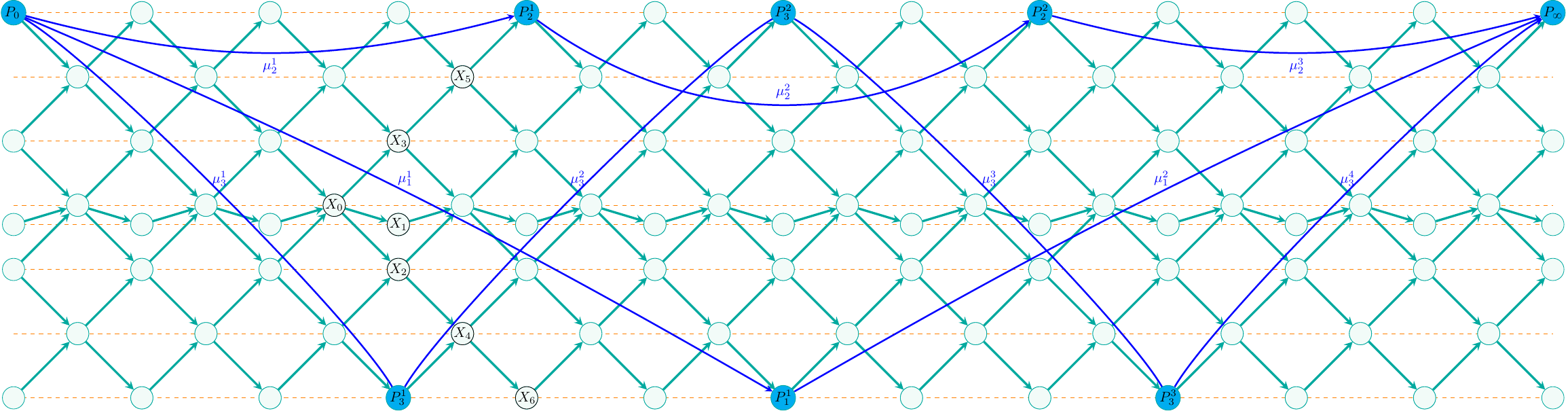} \qquad\qquad\qquad\qquad\qquad}
\vskip 1cm
\makebox[\textwidth][c]{
 \includegraphics[width=25cm]{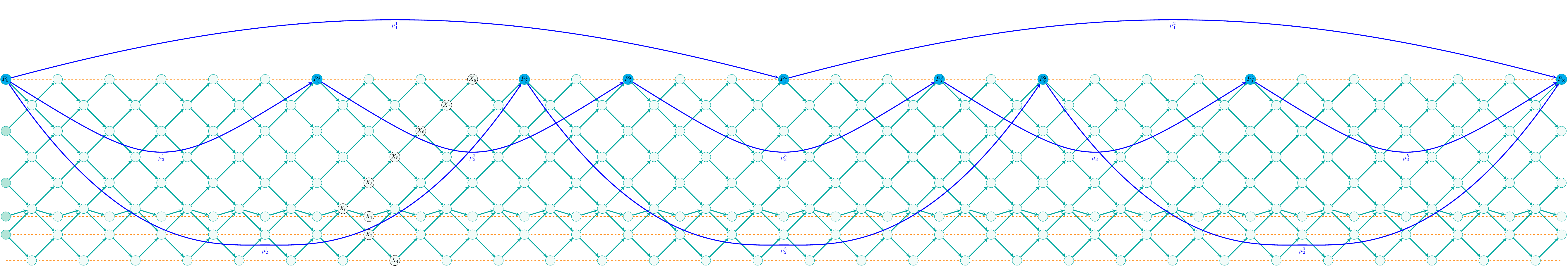} \qquad\qquad\qquad\qquad\qquad}
\vskip .5cm
\caption{The canonical algebra quivers in the AR-quivers of $\D(\canA)$: types $\wh{E_7}$ and $\wh{E_8}$}\label{fig:AR-E78}
\end{figure}
\end{landscape}

\end{document}